\newtheorem{theorem}{Theorem}       
\numberwithin{theorem}{section}           
\newtheorem{lemma}[theorem]{Lemma}    
\newtheorem{proposition}[theorem]{Proposition}
\newtheorem{corollary}[theorem]{Corollary}
\theoremstyle{definition}
\newtheorem{definition}[theorem]{Definition}
\newtheorem{remark}[theorem]{Remark}
\DeclareMathOperator\supp{supp}
\newcommand{\GCD}{\operatorname{GCD}}
\newcommand{\wkto}{\xrightarrow{wk}}
\newcommand{\Q}{\mathbb{Q}}
\newcommand{\pr}{\pi}
\newcommand{\ev}{\nu}
\newcommand{\R}{\mathcal{R}}
\newcommand{\C}{\mathcal{C}}
\newcommand{\x}{\mathbf x}
\newcommand{\w}{\mathbf w}
\newcommand{\bfxi}{\mbox{\boldmath $\xi$}}
\newcommand{\norm}[1]{\left\lVert#1\right\rVert}
\newcommand{\abs}[1]{\left\lvert#1\right\rvert}
\author[R. Ebker, A. Muranova, M. Schmidt]{Ragon Ebker, Anna Muranova*, Max Schmidt}
\address{Ragon Ebker: Fernuniversität Hagen, Fakultät für Mathematik und Informatik, Universitätsstraße 47, 58097 Hagen, Germany}
\email{ragonebker@gmail.com}
\address{Anna Muranova: Uniwersytet Warmińsko-Mazurski w Olsztynie, 
Wydział Matematyki i Informatyki,
ul. Słoneczna 54, 
10-710 Olsztyn, Poland} 
\email{anna.muranova@matman.uwm.edu.pl}
\address{Max Schmidt: Technische Universität Dresden, Fakultät Mathematik, Zellescher Weg 12-14, 01069 Dresden, Germany}
\email{max.schmidt1@tu-dresden.de}
\title[Power iteration for power series matrices]{Power iteration for matrices with power series entries}
\thanks{*Corresponding author.}
\thanks{The authors acknowledge support by COST Action CA18232: Mathematical models for interacting dynamics on networks, since this project was started within 26th Internet Seminar ``Graphs and Discrete Dirichlet Spaces''.}
\begin{document}

\begin{abstract}

We prove the method of power iteration for matrices with at most finite entries from the Levi-Civita field $\C$ under the assumption that there exists an eigenvalue with the strictly largest in absolute value complex part. In this case the weak convergence of a start vector to the eigenvector, that corresponds to the largest eigenvalue, is proven. Further, we prove that the Rayleigh quotient of the largest eigenvector also converges weakly to the corresponding eigenvalue. As a corollary, the same holds for matrices and polynomials over the Puiseux series field. In addition to that, we deliver an implementation of our method in Python.

\smallskip
MSC2020:  65F15, 13F25, 12J15.
\smallskip

Keywords: eigenvalue, matrix with polynomial entries, power iteration, Levi-Civita field, Puiseux series, power series.
\end{abstract}

\maketitle


\section{Introduction}

Matrices with power series and polynomial entries become interesting in algebraic geometry, see e.g. \cite{Cox}, \cite{Manocha} and have a physical meaning in Hamiltonian systems and in the theory of electrical networks, see e.g. \cite{Falb, Friedland, Mehl, Muranova1, Muranova2}. Moreover, the question of finding eigenvalues of diagonalizable matrices over some algebraically closed field is closely related to the question of finding the roots of polynomials over this field. On one hand, the eigenvalues of a matrix are roots of its characteristic polynomial, on the other hand, roots of a polynomial can be found as eigenvalues of its companion matrix. 

 Until now it seems like there exists only very little research on the problem of solving matrices over the field of power series, as opposed to, for example, the research on solving polynomials.
In \cite{kitamoto1994approximate} the problem of finding eigenvalues is solved for matrices with polynomial entries, but very costly, via calculation of the roots of the characteristic polynomial. Applications of this solution can be found for example in \cite{phdthesis}, \cite{mori} and \cite{nano}. Further, in \cite{gentleman} and \cite{sun} the algorithms for finding determinants of matrices with polynomial entries are presented and analyzed. The classical approach to the problem of finding roots of polynomials over the field of power series consists of finding parts of the solution iteratively, lifting the solutions, and applying Newton iteration. Advanced versions of this method are discussed, for example, in \cite{neiger2017fast, poteaux2015improving}.

We present a power method algorithm for matrices with power series entries and believe that, besides being interesting on its own, it also opens the door to the development of further numerical approaches to finding the roots of multivariate polyomials, that might yield better stability and efficiency than existing methods. Our power method is proven to converge over the Levi-Civita field $\C$ with complex coefficients, which includes the Puiseux series field, and the polynomial ring, see Figure~\ref{figure_1}.

\bigskip
 \begin{figure}[H]
 \label{figure_1}
\begin{tikzpicture}
    \draw (3,0) ellipse (2cm and 1cm);
    \draw (1.5,0) ellipse (4cm and 1.5cm);
    \draw (0,0) ellipse (6cm and 2cm);

\node[draw=none,align=left] at (2.8,0) {Polynomial ring};
\node[draw=none,align=left] at (-0.7,0) {Puiseux series field};
\node[draw=none,align=left] at (-4.2,0) {Levi-Civita field};
\end{tikzpicture}
\caption{}
\end{figure}

The Levi-Civita field $\R$ is the smallest non-Archimedean real closed ordered field, which is Cauchy complete in the order topology, while the Levi-Civita field $\C$ is its algebraic closure. Moreover, $\R$ is the smallest Cauchy complete real closed field, containing a field of rational functions on one variable. The Levi-Civita field $\R$ is, in fact, a Cauchy completion of the field of Puiseux series, and the latest is widely studied in algebraic geometry.

In this paper, we show, that the power iteration algorithm for the  largest in modulus eigenvalue converges weakly for diagonalizable matrix over the Levi-Civita field $\C$ if the eigenvalue has the strictly larger in absolute value complex part, then other eigenvalues, Theorem \ref{thm::generalPM}.  This is an analogue of the classical power iteration method for complex numbers \cite{golub2013matrix}.  Moreover, as a simple corollary we show, that the power iteration algorithm for diagonalizable matrices over the Puiseux series converges coefficient-wise under the same conditions, Corollary \ref{cor:Puiseux}. 

The paper is organized as follows. In Section 2 we present the Levi-Civita fields and preliminary results. In Section 3 we prove auxiliary results on weak  convergence over the Levi-Civita fields, which up to our knowledge are not described in the literature, although some proofs are quite straightforward. In Section 4 we show that the power iteration algorithm converges for at most finite matrices, constructing the theoretical base for our method. In Section 5 we discuss, how the power iteration method can be applied to a wider range of matrices, in particular to the matrices over Puiseux series. In Section 6 we present examples, implemented in Python, and  experimental results.

\section{Preliminaries}
The Levi-Civita fields were introduced by Tullio Levi-Civita in 1862 \cite{LeviCivita}  and rediscovered by Ostrowski \cite{Ostrowski1935}, Neder \cite{Neder} and Laugwitz \cite{Laugwitz} independently later. The modern approach can be found in  \cite{Laugwitz1975a} and in \cite{Lightstone1975}. Analysis over the Levi-Civita fields has recent developments in the works by Berz, Shamseddine, and some others \cite{Berz, BOTTAZZI, SBAnalytical, ShamseddineBerz, Shamseddinethesis}.
Firstly, we recall the definition and main notations for the Levi-Civita fields $\R$ and $\C$.

\begin{definition}
A subset $Q$ of the rational numbers $\Bbb Q$ is called \emph{left-finite} if for every number $r\in \Bbb Q$ there are only finitely many elements of $Q$ that are smaller than $r$. 
\end{definition}

\begin{definition}
The set $\R$  (resp. $\C$) is defined as a set of functions from $\Q$ to $\mathbb R$ (resp. $\mathbb C$) with left-finite support.
\end{definition}
Note that any element $a\in \R$  (resp. $\C$) is uniquely determined by an ascending (finite or infinite) sequence $(q_n)$ of support points and a corresponding sequence $(a[q_n])$ of function values. We refer to the pair of the sequences $((q_n), (a[q_n]))$ as \emph{the table of $a$} \cite[Definition 3]{Berz}.

We  use the following terminology (all from the papers by Shamseddine and Berz, e.g.  \cite{Berz, SBAnalytical, ShamseddineBerz, Shamseddinethesis}):
\begin{definition}
For every $a$ in $\R$ (resp. $\C$) we denote:
\begin{itemize}
\item
$\supp (a)=\{q\in \Bbb Q\;\mid\;a[q]\ne 0\}$,
\item
$\lambda (a)=\min(\supp(a))$.
\end{itemize}
Note that $\lambda(a)$ is always well defined due to the left finiteness of the support.
Further, comparing any $a,b$ in $\R$, we write: 
\begin{itemize}
\item
$a\sim b$, if $\lambda(a)=\lambda(b)$,  
\item
$a\approx b$, if $\lambda(a)=\lambda(b)$ and $a[\lambda(a)]=b[\lambda(b)]$,
\item
$a=_r b$, if $a[q]=b[q]$ for all $q\le r\in \Bbb Q$.
\end{itemize}
 \end{definition}

\begin{lemma}\cite[Lemma 2.3]{Berz}
The relations $\sim$, $\approx$ and $=_r$ are equivalence relations.
\end{lemma}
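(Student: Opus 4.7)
The plan is to verify reflexivity, symmetry, and transitivity for each of the three relations directly from their definitions, exploiting the fact that in each case the relation reduces to ordinary equality applied to data extracted from $a$ and $b$.

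For $\sim$, I would observe that $a \sim b$ is nothing but the assertion $\lambda(a) = \lambda(b)$ in $\mathbb{Q}$, so $\sim$ is the pullback of the equality relation on $\mathbb{Q}$ under the map $\lambda$; as such it inherits the three axioms for free. For $\approx$, the same remark applies with $\lambda$ replaced by the two-component map $a \mapsto \bigl(\lambda(a),\, a[\lambda(a)]\bigr)$ taking values in $\mathbb{Q} \times \mathbb{R}$ (resp.\ $\mathbb{Q} \times \mathbb{C}$). For $=_r$, I would note that $a =_r b$ means precisely that the restrictions of $a$ and $b$ to $\{q \in \mathbb{Q} : q \le r\}$ coincide as functions, so $=_r$ is the pullback of equality in the function space $\{q \le r\} \to \mathbb{R}$ (resp.\ $\mathbb{C}$) under restriction, and is therefore an equivalence relation.

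If one prefers a purely hands-on write-up, I would simply unwind the definitions: reflexivity holds because $\lambda(a) = \lambda(a)$, $a[\lambda(a)] = a[\lambda(a)]$, and $a[q] = a[q]$; symmetry holds because equality in $\mathbb{Q}$ and in $\mathbb{R}$ (resp.\ $\mathbb{C}$) is symmetric; and transitivity follows by chaining the corresponding equalities, e.g.\ for $\approx$, from $\lambda(a) = \lambda(b) = \lambda(c)$ and $a[\lambda(a)] = b[\lambda(b)] = c[\lambda(c)]$.

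The only subtle point, which is not really an obstacle, concerns the zero element $0 \in \C$ (or $\R$), whose support is empty and for which $\lambda$ is therefore undefined. I would handle this by the standard convention that $0 \sim 0$ and $0 \approx 0$ by fiat (one may formally set $\lambda(0) = +\infty$ and leave $0[\lambda(0)]$ undefined but agree that $0 \approx 0$), and by observing that $=_r$ presents no such issue at all since it is defined in terms of pointwise values. With this convention in place, the argument above goes through without modification, which I expect is the approach taken in the cited reference.
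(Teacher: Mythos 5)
The paper does not prove this lemma; it simply cites it as \cite[Lemma 2.3]{Berz}, so there is no in-paper proof to compare against. Your argument is correct and is the standard one: each relation is the pullback of an equality relation under an extraction map ($\lambda$ for $\sim$, the pair $(\lambda(a), a[\lambda(a)])$ for $\approx$, and restriction to $\{q \le r\}$ for $=_r$), and pullbacks of equivalence relations are equivalence relations. You also correctly flag the lone subtlety — that $\lambda(0) = \min(\supp 0) = \min \emptyset$ is a priori undefined, despite the paper's claim that $\lambda$ is ``always well defined'' — and the conventional fix ($\lambda(0) = +\infty$, with $0 \sim 0$ and $0 \approx 0$ declared by fiat) is exactly what Berz uses. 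Nothing is missing.
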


Addition on $\R$  (resp. $\C$) is defined component-wise, i.e. for any $a,b\in \R$ (resp. $\C$) we define
$$
(a+b)[q]=a[q]+b[q].
$$
Multiplication is defined as follows:
$$
(a\cdot b)[q]=\sum_{\substack{q_a, q_b\in \Bbb Q,\\q_a+q_b=q}}a[q_a]\cdot b[q_b].
$$
From the definition of multiplication immediately follows that for any $a,b\in \R$  (resp. $\C$) holds
$$
\lambda(ab)=\lambda(a)+\lambda(b).
$$

 There exists a natural embedding of real numbers (resp. complex numbers) into $\R$  (resp. $\C$) defined by 
$\Pi:\mathbb R\to \R$ (resp. $\Pi:\mathbb C\to \C$):
$$
\Pi(s)[q]=
\begin{cases}
s, \mbox{ if }q=0,\\
0, \mbox{ otherwise.}
\end{cases}
$$
We immediately see that $\Pi$ is injective and direct calculations show that $\Pi(s_1+s_2)=\Pi(s_1)+\Pi(s_2)$ and $\Pi(s_1s_2)=\Pi(s_1)\cdot\Pi(s_2)$. By slight abuse of notations, we will refer to $\Pi(s)$ as $s$. It is clear, that the embedding preserves the algebraic structure and is injective, but not surjective.\\

\begin{theorem}\cite[Theorem 2.3 and Theorem 2.4]{Berz}\label{thm:2.5}
$(\R,+,\cdot, 0, 1)$ is a real closed field and  $(\C,+,\cdot, 0, 1)$ is an algebraically closed field.
\end{theorem}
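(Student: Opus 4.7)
The plan is to first establish the field axioms on $\R$, then introduce the natural order, then verify the two defining properties of a real closed field (existence of square roots of positive elements and existence of roots of odd-degree polynomials), and finally deduce algebraic closedness of $\C$ by adjoining $i$ and appealing to the Artin--Schreier characterization.

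For the field structure, commutativity, associativity and distributivity are straightforward from the component-wise addition and convolution-style multiplication, provided one first checks that left-finiteness is preserved. This preservation is the routine but nontrivial bookkeeping: the sum of two left-finite sets is left-finite, and a key lemma is that for left-finite sets $A,B \subset \Q$ and any $r \in \Q$, there are only finitely many pairs $(q_a,q_b) \in A \times B$ with $q_a + q_b \le r$, so the convolution defining $a\cdot b$ is a finite sum at each $q$ and has left-finite support. The central step is the existence of multiplicative inverses. Given $a \ne 0$, I would factor $a = a[\lambda(a)]\, t^{\lambda(a)}(1+h)$, where $t$ denotes the generator with $t[q] = \delta_{q,1}$ and $h$ has $\lambda(h) > 0$, and then define $(1+h)^{-1} = \sum_{n\ge 0} (-h)^n$. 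The key point is to verify that this sum is well defined in $\R$: because $\lambda(h^n) = n\lambda(h) \to \infty$, for each $q \in \Q$ only finitely many summands contribute to the coefficient at $q$, and the total support remains left-finite.

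Next, I would define the order by $a > 0 \Leftrightarrow a[\lambda(a)] > 0$, and check that this is a total order compatible with addition and multiplication (using $\lambda(ab) = \lambda(a) + \lambda(b)$ already noted in the excerpt, and $\lambda(a+b) \ge \min(\lambda(a),\lambda(b))$ with equality unless leading terms cancel). For square roots of a positive $a$, I factor as above and set $\sqrt{a} = \sqrt{a[\lambda(a)]}\, t^{\lambda(a)/2}(1+h)^{1/2}$, where $(1+h)^{1/2}$ is the binomial series $\sum_{n\ge 0}\binom{1/2}{n} h^n$; the same left-finiteness/convergence argument as for the inverse applies since $\lambda(h) > 0$.

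The main obstacle is the odd-degree root property; this is what makes $\R$ a \emph{real closed} field rather than merely an ordered field with square roots. The cleanest conceptual route is a Newton polygon argument: given a monic polynomial $p(x) \in \R[x]$ of odd degree $n$, one studies the Newton polygon of $p$ viewed as a polynomial in $x$ over $\R$, picks an edge of rational slope $-\mu$, substitutes $x = t^{\mu}(c + y)$ for a suitable real $c$ chosen to kill the leading part (using that $\mathbb{R}$ itself has roots of odd-degree real polynomials), and obtains a strictly smaller problem. Iterating produces a Cauchy sequence of partial sums in $\R$ whose limit exists by Cauchy completeness in the order topology; this limit is the desired root. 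The routine but delicate check is that the valuations of the successive corrections strictly increase and that the overall support stays left-finite, which is precisely what Cauchy completeness with respect to the valuation topology buys us. Finally, for $\C = \R \oplus \R\, i$, algebraic closedness is immediate from Artin--Schreier once $\R$ is known to be real closed: any algebraic extension of a real closed field is obtained by adjoining $\sqrt{-1}$, and this extension is already algebraically closed.
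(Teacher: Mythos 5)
The paper does not prove this theorem; it is cited verbatim from the Berz et al.\ reference, so there is no internal proof to compare against. Your sketch follows the standard route, and the pieces on the field structure, the multiplicative inverse via the geometric series $\sum_{n\ge 0}(-h)^n$ with $\lambda(h)>0$, the order, and square roots via the binomial series are all correctly laid out; the left-finiteness bookkeeping you flag is indeed the substance of those steps.

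The genuine gap is in the odd-degree root step, and part of it is a misattribution. You write that strictly increasing valuations of the corrections, together with left-finiteness, ``is precisely what Cauchy completeness \dots buys us.'' Completeness buys neither of these; it only converts an already-Cauchy sequence into a convergent one. Two things therefore remain to be checked. First, at each Newton polygon stage you must be able to choose a real residue root of \emph{odd} multiplicity so that the sub-problem stays odd; this does hold (the residue polynomial has odd degree and its non-real roots pair off, so the real roots carry odd total multiplicity), but it has to be said. Second, and more seriously, ``the valuations strictly increase'' is not enough: a strictly increasing sequence of rationals can be bounded above, in which case the partial sums are not strongly Cauchy and the formal left-finite sum they suggest need not be a root. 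One needs the valuations to tend to $+\infty$, which is the multiplicity-drop argument at the heart of Puiseux's theorem: after finitely many steps the residue root is simple, Newton iteration takes over, and the valuation gain per step is bounded away from zero. Only after that does completeness of $\R$ enter, to pass to the limit. A way to sidestep the parity bookkeeping entirely, and the one most references take, is to reverse your two claims: run Newton--Puiseux over $\mathbb{C}$ to show $\C$ is algebraically closed (where every residue equation has a root), then note that conjugation on $\C$ fixes $\R$ pointwise, so an odd-degree $p\in\R[x]$ has a self-conjugate root in $\C$ and hence a root in $\R$; real-closedness of $\R$ then follows, consistent with the Artin--Schreier step you invoke at the end.
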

Clearly, $\R$ can be embedded in $\C$ with the preservation of the algebraic structure.

The order on $\R$ is defined as follows: for any $a,b\in \R$ we write $a\succ b$ if $(x-y)[\lambda(a-b)]>0$. Then we denote the set of positive elements by 
$$
\R^+=\{a\in \R\;\mid\; a\succ 0\}.
$$
We use the notation $a\succeq 0$, if $a\in \R^+\cup\{0\}$. Then the following can be proven directly by Theorem \ref{thm:2.5} and definitions:

\begin{theorem}
$(\R,+,\cdot, 0, 1, \succ)$ is an ordered field and $(\C,+,\cdot, 0, 1)$ is its algebraic closure.
\end{theorem}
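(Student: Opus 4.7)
The plan is to decompose the statement into two independent claims: first, that $\succ$ makes $(\R,+,\cdot,0,1)$ into an ordered field, and second, that $\C$ is the algebraic closure of $\R$. Theorem~\ref{thm:2.5} already provides the underlying field structures and the fact that $\C$ is algebraically closed; what remains is to verify the order axioms on $\R$ and to identify $\C$ with $\R[\mathbf{i}]$.

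For the ordered-field claim, I would check the axioms in turn. For trichotomy, any nonzero $a\in\R$ has left-finite nonempty support, admitting a minimum $\lambda(a)$ at which $a[\lambda(a)]\in\mathbb{R}$ is nonzero, hence either $a\succ 0$ or $-a\succ 0$; applying this to $a-b$ yields trichotomy of $\succ$. Additive compatibility is immediate from $(a+c)-(b+c)=a-b$. For transitivity, given $a\succ b$ and $b\succ c$, I would argue that the leading coefficients of $a-b$ and $b-c$ are both positive reals, so they cannot cancel in the sum $(a-c)=(a-b)+(b-c)$; hence $\lambda(a-c)=\min(\lambda(a-b),\lambda(b-c))$ and the leading coefficient of $a-c$ is positive. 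Multiplicative compatibility follows from the formulas $\lambda(ab)=\lambda(a)+\lambda(b)$ and $(ab)[\lambda(ab)]=a[\lambda(a)]\cdot b[\lambda(b)]$ already derived in the preliminaries, since the product of two positive reals is positive.

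For the algebraic-closure claim, I would invoke the Artin--Schreier theorem: the algebraic closure of a real closed field is a degree-$2$ extension obtained by adjoining a square root of $-1$. It therefore suffices to show $\C=\R[\mathbf{i}]$, where $\mathbf{i}:=\Pi(i)$ satisfies $\mathbf{i}^2=-1$. Given any $c\in\C$, interpreted as $c:\Q\to\mathbb{C}$, I set $a[q]:=\operatorname{Re}(c[q])$ and $b[q]:=\operatorname{Im}(c[q])$; both $a,b$ have support contained in $\supp(c)$, hence left-finite, so $a,b\in\R$ and $c=a+\mathbf{i}\cdot b$. The converse inclusion $\R+\mathbf{i}\,\R\subseteq\C$ is clear from the definitions. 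Together with the algebraic closedness of $\C$ furnished by Theorem~\ref{thm:2.5}, this identifies $\C$ as the algebraic closure of $\R$.

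The main subtle point is avoiding circularity with Theorem~\ref{thm:2.5}: since real closedness already subsumes the ordered-field axioms, the present statement is largely formal once one unpacks definitions. The substantive work lies in verifying that no cancellation of leading coefficients occurs when adding two positive elements, which is exactly what makes $\succ$ compatible with the field operations; this is the sole non-bookkeeping step of the argument.
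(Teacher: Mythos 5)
Your proof is correct and matches the paper's (unelaborated) approach: the paper merely asserts that the result ``can be proven directly by Theorem~\ref{thm:2.5} and definitions,'' and your verification of the order axioms from the definition of $\succ$ (with the key observation that leading coefficients cannot cancel when both are positive reals) together with the identification $\C=\R[\Pi(i)]$ fills in precisely those details. One minor redundancy: once you have $\C=\R[\Pi(i)]$ and the algebraic closedness of $\C$ from Theorem~\ref{thm:2.5}, you do not need to invoke Artin--Schreier at all, since an algebraically closed field that is algebraic over $\R$ is by definition its algebraic closure.
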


Now we can define an analogue of absolute value on $\R$ as $|\cdot|:\R\to \mathcal \R^+\cup\{0\}$: for any $a\in \R$ we define
$$
|a|=\begin{cases}
a, \mbox{ if }a\succeq 0,\\
-a, \mbox{ otherwise}.
\end{cases}
$$
 Moreover, the corresponding absolute value is
defined on $\C$ in the natural way \cite[Definition 11]{Berz}: since for $z\in \C$ there exists a unique representation $z=a+ib, a,b\in \R$ we define 
$$
|z|:=\sqrt{a^2+b^2}
$$
and, further, we denote by
$$
\overline z:=a-ib.
$$
the conjugated of $z$. Similar to the case of complex numbers we have 
$$
\overline z z=|z|^2,
$$ 
for any $z\in \C$.

It is straightforward to prove, that $|\cdot|$ on the Levi-Civita fields is an analogue of the norm, i.e. it is positive, satisfies a triangle inequality, and $|a|=0$ if and only if $a=0$.

The number $d\in \R$ is defined as follows:
$$
d[q]=\begin{cases}
1, q=1,\\
0, \mbox{ otherwise.}
\end{cases}
$$
It is easy to see, that $d$ is smaller than any real number (as an element of $\mathcal R$), i.e. $d$ plays a role of infinitesimal in $\R$ and, therefore, the Levi-Civita field $\R$ is non-Archimedean. Nevertheless, $d$ is not the only infinitesimal in $\R$ (e.g. $d^2$, $5d$, $d+d^3$ are infinitesimals).

Therefore, for $a\in \R$ (resp. $a\in \C$) we say that:
\begin{itemize}
\item
$a$ is \emph{infinitely small} if $\lambda(a)>0$,
\item
$a$ is \emph{infinitely large} if $\lambda(a)< 0$.
\end{itemize}
 Moreover, we say  that
$a$ is \emph{at most finite} if $\lambda(a)\ge 0$.

Now we describe two types of convergence over Levi-Civita fields: the strong one (or convergence with respect to the order topology) and the weak one. The latest one is of special importance for us, since any element of $\mathcal R$ (resp. $\mathcal C$) can be presented as formal power series, see Proposition \ref{prop::powerseriesrepresentation} over real (resp. complex numbers),  and weak convergence ensures convergence of the coefficients in the usual sense. The definitions and main properties of both convergences can be found for $\R$ in e.g. \cite{Berz, SBAnalytical, Shamseddinethesis} and for $\C$ in \cite{SBAnalytical}. We present them here for the sake of completeness.

\begin{definition}
We call the sequence $(a_n)$ in $\R$ (resp. $\C$) strongly convergent to the limit $a$ in $\R$ (resp. $\C$), if for any $\epsilon \in \R^+$ there exists  $N_0\in \mathbb N$  such that for all $n\ge N_0$ we have
$$
|a_n-a|\prec \epsilon.
$$
In this case, we use the notation $\lim_{n\to \infty} a_n=a$ or $a_n\to a$.
\end{definition}

The following Lemma follows  from the fact, that strong convergence is convergence with respect to the order topology \cite[Definition 12 and Definition 16]{Berz}:
\begin{lemma}
 Let $(a_n)$ be a sequence in $\R$ (resp. in $\C$) that is strongly convergent. Then its limit is unique.
\end{lemma}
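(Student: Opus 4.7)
The plan is to argue by contradiction in the standard Hausdorff-style fashion, since $(\R,|\cdot|)$ (resp. $(\C,|\cdot|)$) satisfies all the axioms of an absolute value: positivity, the triangle inequality, and the fact that $|x|=0$ iff $x=0$, as already collected in the preliminaries. I would suppose that the sequence $(a_n)$ has two limits $a,b\in\R$ (resp.\ $\C$) with $a\neq b$, and derive a contradiction from the definition of strong convergence.

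First I would observe that $a\neq b$ forces $|a-b|\succ 0$, since $|\cdot|$ vanishes only at $0$. Hence $\epsilon:=\tfrac{1}{2}|a-b|$ is a well-defined element of $\R^+$ (using that the embedding $\Pi:\mathbb R\to\R$ gives us $\tfrac{1}{2}\in\R$, and the product of a positive real with a positive element of $\R$ is again in $\R^+$). I would then apply the definition of strong convergence twice: choose $N_1,N_2\in\mathbb N$ such that $|a_n-a|\prec\epsilon$ for all $n\ge N_1$ and $|a_n-b|\prec\epsilon$ for all $n\ge N_2$, and set $N_0:=\max\{N_1,N_2\}$.

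For any $n\ge N_0$, the triangle inequality yields
$$
|a-b|=|(a-a_n)+(a_n-b)|\le |a_n-a|+|a_n-b|\prec \epsilon+\epsilon=|a-b|,
$$
where the last strict inequality uses that $\prec$ is compatible with addition in the ordered field $\R$. This gives $|a-b|\prec|a-b|$, contradicting the irreflexivity of $\prec$; the same argument works verbatim in $\C$ using the $\R$-valued absolute value defined in the excerpt.

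I do not anticipate a genuine obstacle: all the ingredients (triangle inequality, strictness of $|\cdot|$, the field operations on strict order) were either explicitly stated or cited from \cite{Berz}. The only subtlety worth flagging is that one must halve inside $\R$ rather than inside $\mathbb Q$, so that $\epsilon$ sits in $\R^+$ and the definition of strong convergence can be invoked; this is handled by the embedding $\Pi$.
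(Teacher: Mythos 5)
Your argument is correct and self-contained. The paper, however, does not write out a proof at all: it simply observes that strong convergence is by definition convergence with respect to the order topology on $\R$ (resp.\ $\C$) and appeals to the standard fact that the order topology on a linearly ordered set is Hausdorff, which is what the citation to Berz's Definitions 12 and 16 supplies. Your proof unrolls that topological fact into a direct $\epsilon$-argument: halving $|a-b|$ produces the $\epsilon$, and the triangle inequality together with compatibility of $\prec$ with addition plays the role that the Hausdorff separation axiom plays in the cited route. The two are essentially equivalent in content, but yours is fully elementary and does not presuppose any topology, at the cost of being longer than the paper's one-line deferral. The caveat you flag is genuine and handled correctly: the scalar $\tfrac12$ must be taken inside $\R$ via the embedding $\Pi$ (not in the exponent lattice $\mathbb Q$) so that $\epsilon\in\R^+$, as the definition of strong convergence requires.
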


\begin{definition}
The sequence $(a_n)$  in $\R$ (resp. $\C$)   is a \emph{strongly Cauchy sequence} if  for any $\epsilon \in \R^+$ there exists  $N_0\in \mathbb N$  such that for all $n, m\ge N_0$ we have
$$
|a_n-a_m|\prec \epsilon.
$$ 
\end{definition}

\begin{theorem}\cite[Theorem 4.5]{Berz}
The Levi-Civita fields $\R$ (resp. $\C$) is Cauchy complete with respect to the strong convergence, i.e. $(a_n)$  is a strongly Cauchy sequence in $\R$ (resp. $\C$) if and only if $(a_n)$ converges strongly.
\end{theorem}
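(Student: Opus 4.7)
The statement is a biconditional. The easy direction, that strongly convergent implies strongly Cauchy, is the standard triangle-inequality argument: for $a_n\to a$ and $\epsilon\in\R^+$, pick $N$ with $|a_n-a|\prec \epsilon/2$ for all $n\ge N$; then $|a_n-a_m|\le|a_n-a|+|a_m-a|\prec \epsilon$ for $n,m\ge N$, using that $\R^+$ is multiplicatively closed so $\epsilon/2\in\R^+$.

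For the substantive direction, my plan is to convert the strongly Cauchy hypothesis into a coefficientwise stabilization statement, build the limit pointwise on $\Bbb Q$, and then verify the resulting function actually lies in $\R$. For each $r\in\Bbb Q$, let $\eta_r\in\R^+$ denote the element supported at the single point $r+1$ with value $1$, so $\lambda(\eta_r)=r+1$. Applying the Cauchy hypothesis with $\epsilon=\eta_r$ yields $N_r\in\Bbb N$ such that $|a_n-a_m|\prec\eta_r$ for $n,m\ge N_r$; unpacking the definitions of $\prec$ and of $|\cdot|$ shows this forces $\lambda(a_n-a_m)\ge r+1$, equivalently $a_n=_r a_m$. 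Consequently, for every fixed $q\in\Bbb Q$ the sequence $(a_n[q])$ in $\mathbb R$ (resp.\ $\mathbb C$) becomes eventually constant; define $a:\Bbb Q\to\mathbb R$ (resp.\ $\mathbb C$) by letting $a[q]$ be this eventual value.

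The main obstacle is to check that this pointwise limit actually belongs to $\R$ (resp.\ $\C$), i.e.\ that $\supp(a)$ is left-finite. I would argue by contradiction: if $\supp(a)$ contained infinitely many points $\le r$ for some rational $r$, then for any $n\ge N_r$ the identity $a_n[q]=a[q]$ on $q\le r$ would force those infinitely many points into $\supp(a_n)$, contradicting left-finiteness of $\supp(a_n)$. With left-finiteness secured, strong convergence $a_n\to a$ follows immediately: given $\epsilon\in\R^+$, pick any rational $r>\lambda(\epsilon)$ and take $N_r$; for $n\ge N_r$ we have $a_n=_r a$, hence $\lambda(a_n-a)>r>\lambda(\epsilon)$, which yields $|a_n-a|\prec\epsilon$.

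The argument for $\C$ is verbatim the same, with $\mathbb C$ replacing $\mathbb R$ as the coefficient field throughout; no separate treatment is needed.
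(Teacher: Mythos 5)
The paper cites this result from Berz (Theorem 4.5) and does not reproduce a proof, so there is nothing in-text to compare against; your argument is correct and is the standard one for this theorem. The coefficient-stabilization step (choosing $\epsilon=d^{r+1}$ to force $a_n=_r a_m$ eventually, defining $a[q]$ as the eventual value, and inheriting left-finiteness of $\supp(a)$ below any $r$ from the single term $a_{N_r}$) is precisely how the cited reference proceeds, and your verification that $\lambda(a_n-a)>\lambda(\epsilon)$ gives $|a_n-a|\prec\epsilon$ closes the argument cleanly.
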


\begin{proposition}\cite[Theorem 4.3]{Berz}\label{prop::powerseriesrepresentation}
Let $((q_i), (a[q_i]))$ be the table of $a\in \R$ (resp.  $a\in \C$). Then the sequence $a_n=\sum_{i=0}^n a[q_i]\cdot d^{q_i}$, converges strongly to the limit $a$. Hence we can write
\begin{equation}\label{eq::powerseriesrepresentation}
a=\sum_{i=0}^\infty a[q_i]\cdot d^{q_i}.
\end{equation}
\end{proposition}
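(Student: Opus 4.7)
The plan is to show that the partial sums $a_n$ differ from $a$ by an element whose $\lambda$-valuation grows to infinity, and then translate this into strong convergence via the description of $\prec$ in terms of $\lambda$.

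First I would unpack the structure of $a - a_n$. By the definition of the powers $d^{q}$ (which satisfy $d^{q}[p]=1$ if $p=q$ and $0$ otherwise), each term $a[q_i]\cdot d^{q_i}$ is the function supported only at $q_i$ with value $a[q_i]$. Since the $q_i$ are pairwise distinct, the partial sum $a_n$ agrees with $a$ on $\{q_0,\dots,q_n\}$ and vanishes elsewhere. Therefore
\begin{equation*}
\supp(a-a_n)\subseteq\{q_{n+1},q_{n+2},\dots\},
\end{equation*}
so either $a-a_n=0$ (if the support is finite and exhausted) or $\lambda(a-a_n)=q_{n+1}$.

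Second, I would use left-finiteness of $\supp(a)$ to argue that $q_n\to\infty$ as $n\to\infty$. Indeed, if the support is infinite and the $q_i$ are listed in ascending order, then for every $r\in\mathbb Q$ only finitely many $q_i$ lie below $r$, so there is some $N$ with $q_n\ge r$ for all $n\ge N$.

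Third, I would turn this into the definition of strong convergence. Given $\epsilon\in\R^+$, set $r:=\lambda(\epsilon)\in\mathbb Q$ and choose $N_0$ so that $q_{n+1}>r$ for all $n\ge N_0$. Then $\lambda(a-a_n)>\lambda(\epsilon)$, hence $\lambda(\epsilon-|a-a_n|)=\lambda(\epsilon)$ with leading coefficient $\epsilon[\lambda(\epsilon)]>0$, which is exactly $|a-a_n|\prec\epsilon$. This gives $a_n\to a$ strongly.

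For the $\C$ case, the only adjustment is to note that if $z=x+iy\in\C$ with $x,y\in\R$ then $|z|^2=x^2+y^2$ has $\lambda(|z|^2)=2\min(\lambda(x),\lambda(y))$ since $x^2,y^2\succeq 0$ preclude cancellation, so $\lambda(|a-a_n|)\ge q_{n+1}$ as well, and the same argument applies verbatim. No real obstacle arises in this proof beyond correctly relating $\prec$ to $\lambda$; the heart of the matter is the automatic divergence $q_n\to\infty$ forced by left-finiteness.
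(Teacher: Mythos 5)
The paper does not supply its own proof of this proposition; it is quoted directly from the Berz reference indicated in the statement, so there is no internal proof to compare against. Your argument is correct and is the natural direct proof. You correctly observe that $a - a_n$ is supported on $\{q_{n+1}, q_{n+2}, \dots\}$, so $\lambda(a-a_n) = q_{n+1}$ (or $a - a_n = 0$ when the support is finite and exhausted); you invoke left-finiteness of $\supp(a)$ to conclude that $q_n \to \infty$; and you then translate this into $|a - a_n| \prec \epsilon$ by noting that once $\lambda(a - a_n) > \lambda(\epsilon)$, the element $\epsilon - |a - a_n|$ has leading index $\lambda(\epsilon)$ and positive leading coefficient $\epsilon[\lambda(\epsilon)]$. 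The only extra care needed is the $\C$ case, which you handle correctly via the valuation identity $\lambda\bigl(\sqrt{x^2+y^2}\bigr) = \min(\lambda(x),\lambda(y))$ for $x, y \in \R$; this rests on the fact that squares in the real closed field $\R$ have nonnegative leading coefficients, so no cancellation can raise the valuation of $x^2 + y^2$, and hence $\lambda(|a-a_n|) = \lambda(a-a_n)$ in $\C$ just as in $\R$.
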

Note, that there is a slight abuse of notations in \eqref{eq::powerseriesrepresentation}, since the sum can be also finite. Then $a_{N_0}=a$ for some $N_0$ and we put $a_n=a_{N_0}$ for all $n\ge N_0.$

To define a weak convergence, we first need to define the following semi-norm:

\begin{definition}
Given $r\in \Bbb Q$, we define a map $\|\cdot\|_r:\R \mbox{ (resp. }\C\mbox{)} \to\R$ as follows:
$$
\|a\|_r=\sup_{q_i\le r}|a[q_i]|.
$$
\end{definition}

The fact that $\|\cdot\|_r$ is an analogue of semi-norm for all $r\in \Bbb Q$ is straightforward to prove, more details one can find in e.g. \cite{Berz, SBAnalytical, Shamseddinethesis}.

\begin{definition}\label{def:weakconvergence}
A sequence $(a_n)$ in $\R$ (resp. $\C$) is said to be {\em weakly convergent} if
there exists an $a$ in $\R$ (resp. $\C$) such that for all $r>0, r\in \Bbb R$, there exists $N_0\in \Bbb N$ such that
$$
\|a_n-a\|_{1/r}<r\mbox{ for all }n>N_0.
$$
If that is the case, we call $a$ the {\em  weak limit} of the sequence $(a_n)$, and we write
$$
a=wk\lim_{n\to \infty}a_n \mbox{ or }a_n\wkto a.
$$
\end{definition}

The proof of the following Lemma is straightforward and for the Levi-Civita field $\R$ is presented in \cite[Lemma 4.10]{Shamseddinethesis}.
\begin{lemma}
 Let $(a_n)$ be a sequence in $\R$ (resp. $\C$)  that is weakly convergent. Then $(a_n)$ has
exactly one weak limit in $\R$  (resp. $\C$).
\end{lemma}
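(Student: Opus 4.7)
The plan is a standard uniqueness-of-limit argument using the triangle inequality for the semi-norms $\|\cdot\|_r$. Suppose, for contradiction, that $(a_n)$ has two weak limits $a$ and $b$ with $a\neq b$.

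First I would invoke the definition of weak convergence: for every $r>0$ in $\mathbb{R}$, there exist $N_1,N_2\in\mathbb{N}$ such that $\|a_n-a\|_{1/r}<r$ for $n>N_1$ and $\|a_n-b\|_{1/r}<r$ for $n>N_2$. Using that $\|\cdot\|_{1/r}$ is an analogue of a semi-norm (in particular, it satisfies the triangle inequality on its finite values, a fact noted after the definition), I get
\[
\|a-b\|_{1/r}\le \|a-a_n\|_{1/r}+\|a_n-b\|_{1/r}<2r
\]
for any $n>\max(N_1,N_2)$. Since $r>0$ was arbitrary, this yields $\|a-b\|_{1/r}<2r$ for every positive real $r$.

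Next, I would derive the contradiction from $a\neq b$. Set $c:=a-b$. Since $c\neq 0$, the support of $c$ is non-empty and left-finite, so $q_0:=\lambda(c)\in \mathbb{Q}$ is well defined and $c[q_0]\neq 0$, giving $|c[q_0]|>0$ as a positive real (or complex absolute value). I now choose $r>0$ in $\mathbb{R}$ small enough to satisfy simultaneously
\[
0<r<\tfrac{1}{2}\,|c[q_0]| \quad\text{and}\quad r\le \tfrac{1}{\max(1,q_0)},
\]
so that $q_0\le 1/r$. Then $q_0$ lies in the index set of the supremum defining $\|c\|_{1/r}$, which yields
\[
\|a-b\|_{1/r}=\|c\|_{1/r}\ge |c[q_0]|>2r,
\]
contradicting the bound $\|a-b\|_{1/r}<2r$ obtained above. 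Hence $a=b$.

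The argument is essentially routine; the only subtle point is making sure the choice of $r$ puts $q_0$ into the range of the semi-norm's supremum, which is why I treat the cases $q_0\le 0$ and $q_0>0$ uniformly by taking $r$ sufficiently small. The same proof works verbatim for $\mathcal{C}$, since the absolute value on $\mathcal{C}$ enjoys the same positivity and triangle properties.
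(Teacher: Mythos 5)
Your proof is correct. The paper does not give its own argument for this lemma (it merely cites \cite[Lemma~4.10]{Shamseddinethesis}), but the argument you give — triangle inequality for $\|\cdot\|_{1/r}$ to bound $\|a-b\|_{1/r}<2r$ for all real $r>0$, then picking $r$ small enough that $q_0=\lambda(a-b)$ falls into the range of the supremum and $2r<|(a-b)[q_0]|$ — is the standard first-principles proof, and your handling of the case distinction on the sign of $q_0$ via $r\le 1/\max(1,q_0)$ is the right way to make the index condition $q_0\le 1/r$ hold uniformly. One could alternatively derive uniqueness from part 1 of the Convergence Criterion (Theorem~\ref{thm:weakconvergence}), since weak convergence forces $a_n[q]\to a[q]$ in $\mathbb{R}$ (resp.\ $\mathbb{C}$) for each $q$, and real/complex limits are unique; however, that theorem appears later in the paper, so your self-contained semi-norm argument is the more appropriate one at this point in the exposition.
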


The relation between weak and strong convergences is given by the following:
\begin{lemma}\cite[Theorem 4.7]{Berz}
Strong convergence in $\R$ (resp. $\C$) implies weak convergence to the same limit.
\end{lemma}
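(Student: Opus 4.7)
The plan is to take the real tolerance $r > 0$ from Definition \ref{def:weakconvergence} and produce a Levi-Civita-positive $\epsilon \in \R^+$ whose strong-convergence inequality $|a_n - a| \prec \epsilon$ already forces the weak-convergence inequality $\|a_n - a\|_{1/r} < r$. A convenient choice is $\epsilon := d^s$ for any rational $s > 1/r$; this lies in $\R^+$ since $d^s$ has support $\{s\}$ with value $1$ there, hence positive leading coefficient.

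Strong convergence of $(a_n)$ to $a$ then yields some $N_0 \in \mathbb{N}$ with $|a_n - a| \prec d^s$ for all $n \ge N_0$. The next step is to translate this into a lower bound $\lambda(a_n - a) \ge s$ on the residual $c := a_n - a$, the case $c = 0$ being trivial. The key ingredient is the identity $\lambda(|c|) = \lambda(c)$, which holds in $\R$ directly from the piecewise definition of the Levi-Civita absolute value, and in $\C$ by writing $c = c_R + i c_I$ with $c_R, c_I \in \R$ and observing that $|c|^2 = c_R^2 + c_I^2$ has leading order $2\min\bigl(\lambda(c_R), \lambda(c_I)\bigr) = 2\lambda(c)$, so that $\lambda(|c|) = \lambda(c)$ after taking the square root in $\R^+$. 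Assuming for contradiction that $\lambda(c) < s$, the support of $d^s - |c|$ has minimum $\lambda(c)$ since $\lambda(c) \neq s$, and $(d^s - |c|)[\lambda(c)] = -|c|[\lambda(c)] < 0$ because the leading coefficient of a nonzero Levi-Civita-positive element is positive; this gives $d^s \prec |c|$, contradicting $|c| \prec d^s$.

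Once $\lambda(a_n - a) \ge s > 1/r$ is established, the residual $a_n - a$ has no support point in $(-\infty, 1/r]$, so $\|a_n - a\|_{1/r}$ is a supremum over the empty set and evaluates to $0 < r$, matching Definition \ref{def:weakconvergence} and giving weak convergence to the same limit $a$. I do not anticipate a serious obstacle; the only mildly non-trivial point is the identity $\lambda(|c|) = \lambda(c)$ for $c \in \C$, which is a short but worth-verifying computation on the squared absolute value. The choice $s > 1/r$ (strict) rather than $s = 1/r$ is what avoids a boundary-case headache at the single support point $1/r$ and makes the conclusion immediate.
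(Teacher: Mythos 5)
Your proof is correct. Since the paper cites this result from Berz (Theorem 4.7) rather than proving it, there is no in-paper argument to compare against, but yours is the canonical one: test strong convergence against $\epsilon = d^s$ with a rational $s > 1/r$, deduce $\lambda(a_n - a) \ge s$ from the order structure (using $\lambda(|c|) = \lambda(c)$, which you correctly verify in $\C$ via $\lambda(|c|^2) = 2\lambda(c)$), and conclude that $a_n - a$ has no support points at or below $1/r$, so $\|a_n - a\|_{1/r} = 0 < r$.
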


Further, there is an analogue of the Cauchy sequence with respect to weak topology:

\begin{definition}
The sequence $(a_n)$  in $\R$ (resp. $\C$)   is called \emph{weakly Cauchy sequence} if  for any $r>0$ in $\mathbb R$ there exists  $N_0\in \mathbb N$  such that for all $n, m\ge N_0$ we have
$$
\|a_n-a_m\|_{1/r}< r.
$$ 
\end{definition}

\begin{lemma}\label{lem::wkCauchy}
Every weakly convergent $\R$ (resp. $\C$) sequence is weakly Cauchy.
\end{lemma}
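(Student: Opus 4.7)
The plan is to adapt the classical ``convergent implies Cauchy'' argument, with two mild twists that arise because in Definition \ref{def:weakconvergence} the bound $r$ and the index $1/r$ are coupled. Let $(a_n)$ converge weakly to some $a$, and fix $r>0$ in $\mathbb{R}$. The naive triangle inequality would only yield a bound of $2r$, and we also need to match the index $1/r$ appearing in the Cauchy condition.

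First, I would apply the weak convergence hypothesis not with $r$ itself but with $r' := r/2$. This produces an $N_0\in \mathbb{N}$ such that $\|a_n - a\|_{2/r} < r/2$ for every $n > N_0$. Second, I would use monotonicity of the semi-norm in its index: since $1/r \le 2/r$, the supremum defining $\|\cdot\|_{1/r}$ is taken over a subset of the supports counted by $\|\cdot\|_{2/r}$, so $\|b\|_{1/r} \le \|b\|_{2/r}$ for every $b\in\C$. In particular $\|a_n - a\|_{1/r} < r/2$ for all $n > N_0$.

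Now, for $n,m > N_0$, the triangle inequality for the semi-norm $\|\cdot\|_{1/r}$ (which the paper recalls just before Definition \ref{def:weakconvergence}) gives
\begin{equation*}
\|a_n - a_m\|_{1/r} \le \|a_n - a\|_{1/r} + \|a - a_m\|_{1/r} < \tfrac{r}{2} + \tfrac{r}{2} = r,
\end{equation*}
which is exactly the weakly Cauchy condition. The same argument works verbatim in $\C$, as the only ingredients used are the triangle inequality for $\|\cdot\|_{1/r}$ and monotonicity of $\|\cdot\|_s$ in $s$, both of which hold equally over $\R$ and $\C$.

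There is no genuine obstacle; the only nonroutine aspect is remembering to weaken $r$ to $r/2$ before invoking the definition of weak convergence and to use monotonicity in the semi-norm index to match the index $1/r$ required by the weakly Cauchy definition.
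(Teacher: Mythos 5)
Your argument is correct. Note that the paper states Lemma~\ref{lem::wkCauchy} without proof, treating it as immediate, so there is no in-paper proof to compare against. Your route is sound: apply Definition~\ref{def:weakconvergence} at $r/2$ to obtain $\|a_n-a\|_{2/r}<r/2$ for $n>N_0$, invoke monotonicity of $\|\cdot\|_s$ in the index $s$ (which indeed holds, since enlarging $s$ only enlarges the set over which the supremum is taken) to pass from the index $2/r$ to the index $1/r$ required by the Cauchy condition, and then use the triangle inequality for $\|\cdot\|_{1/r}$. An equally elementary alternative, which sidesteps the monotonicity observation, is to invoke Proposition~\ref{prop:weakconvergence}, which decouples the bound from the index; applying it with $r_0=1/r$ and $r_1=r/2$ yields $\|a_n-a\|_{1/r}<r/2$ directly, after which the triangle inequality finishes exactly as you wrote. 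Your version has the advantage of relying only on Definition~\ref{def:weakconvergence}, which precedes the Lemma in the text, whereas Proposition~\ref{prop:weakconvergence} appears only in Section~3.
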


Note that $\R$ and $\C$ are not Cauchy complete with respect to the weak convergence. For further details see e.g. \cite{Berz, Shamseddinethesis}.

The definition of weak convergence is complicated to understand and use, therefore in proofs, we will use Theorem \ref{thm:weakconvergence}, formulated below, which provides us with a criterion for weak convergence of a sequence in terms of coefficients and supports of its elements. To formulate Theorem \ref{thm:weakconvergence} we need the following definition of regularity.
\begin{definition}
A sequence $(a_n)$ in $\R$ (resp. $\C$)  is called {\em regular} if and only if
the union of the supports of all elements of the sequence, i.e. the set $\cup_{n\in\Bbb N} \supp a_n$, is a left-finite set.
\end{definition}

\begin{theorem}\cite[Convergence Criterion for Weak Convergence]{SBAnalytical}
\label{thm:weakconvergence}
\hspace{0pt}
\begin{enumerate}[label=$\arabic*.$]
\item
Let the sequence $(a_n)$ in $\R$ (resp. $\C$) converges weakly to the limit $a$. Then, the sequence $(a_n[q])$ converges to $a[q]$ in $\Bbb R$ (resp. $\mathbb C$),
for all $q\in \Bbb Q$, and the convergence is uniform on every subset of $\Bbb Q$ bounded above.
\item
Let on the other hand $(a_n)$ in $\R$ (resp. $\C$) be regular, and let the sequence $(a_n[q])$ converge in $\Bbb R$ (resp. $\Bbb C$) to
$a[q]$ for all $q\in \Bbb Q$. Then $a_n$ converges weakly in $\R$ (resp. $\C$) to $a$.
\end{enumerate}
\end{theorem}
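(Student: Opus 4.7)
The statement has two directions, and both reduce to carefully unpacking the definition of $\|\cdot\|_{1/r}$ and choosing $r$ in relation to the point or bounded set under consideration.

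For the forward direction (weak convergence implies coefficient-wise and locally uniform convergence), suppose $a_n\wkto a$ and fix a subset $S\subset\mathbb{Q}$ bounded above by some $M\in\mathbb{Q}$. Given $\varepsilon>0$, I would choose a real $r>0$ small enough that both $r<\varepsilon$ and $1/r\ge M$; this is possible since $1/r\to\infty$ as $r\to 0^+$. By Definition~\ref{def:weakconvergence} there exists $N_0$ with $\|a_n-a\|_{1/r}<r$ for all $n>N_0$. Every $q\in S$ satisfies $q\le M\le 1/r$, so
\[
|a_n[q]-a[q]|\;\le\;\|a_n-a\|_{1/r}\;<\;r\;<\;\varepsilon
\]
uniformly in $q\in S$ whenever $n>N_0$. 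Taking $S=\{q_0\}$ recovers coefficient-wise convergence at any single point $q_0\in\mathbb{Q}$.

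For the backward direction, assume regularity and coefficient-wise convergence. First I would verify that $a$ actually lies in $\R$ (resp.\ $\C$): any $q\notin\bigcup_n\supp(a_n)$ satisfies $a_n[q]=0$ for every $n$, so by pointwise convergence $a[q]=0$; hence $\supp(a)\subset\bigcup_n\supp(a_n)$, and the latter is left-finite by regularity. Now fix a real $r>0$ and set
\[
T\;=\;\Bigl(\textstyle\bigcup_n\supp(a_n)\;\cup\;\supp(a)\Bigr)\,\cap\,\{q\in\mathbb{Q}:q\le 1/r\}.
\]
Left-finiteness of the support union combined with the upper bound $1/r$ forces $T$ to be a \emph{finite} set. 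For $q\le 1/r$ with $q\notin T$ both $a_n[q]$ and $a[q]$ vanish, contributing $0$ to the supremum. On the finite set $T$, pointwise convergence is automatically uniform, so there exists $N_0$ with $|a_n[q]-a[q]|<r$ for every $q\in T$ and every $n>N_0$. Combining the two observations yields $\|a_n-a\|_{1/r}<r$, exactly the condition in Definition~\ref{def:weakconvergence}.

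The only genuinely delicate point is ensuring that the candidate limit $a$ has left-finite support; this is precisely where regularity of the sequence is indispensable, since without it pointwise convergence alone could produce a function $\mathbb{Q}\to\mathbb{R}$ that fails to be an element of the Levi-Civita field. Everything else is a routine juggling of the two roles played by $r$ (a threshold for the coefficients and, via $1/r$, a cutoff on the support), together with the elementary fact that pointwise convergence on a finite set is uniform.
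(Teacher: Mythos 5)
The paper does not prove this theorem; it is quoted directly from Shamseddine--Berz (\cite{SBAnalytical}, ``Convergence Criterion for Weak Convergence'') as an external result. Your argument is therefore a freestanding proof rather than a reproduction of anything in the paper, and it is correct: the forward direction is exactly the right juggling of $r$ versus $1/r$ in Definition~\ref{def:weakconvergence}, and the backward direction correctly identifies the two points that matter---that regularity guarantees the candidate limit $a$ has left-finite support (so it genuinely lies in the field), and that left-finiteness makes the set of relevant exponents below any cutoff $1/r$ finite, so pointwise convergence there is automatically uniform and the semi-norm bound $\|a_n-a\|_{1/r}<r$ follows. One small but worthwhile observation you could have added for completeness: since $\supp(a_n-a)\subset\bigcup_m\supp(a_m)\cup\supp(a)$ and you have already shown $\supp(a)\subset\bigcup_m\supp(a_m)$, the supremum defining $\|a_n-a\|_{1/r}$ is literally taken over a subset of your finite set $T$, which makes the final inequality immediate even under the paper's convention that the supremum ranges over support points only.
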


The following several Lemmas describe more properties of weak convergence.
\begin{lemma}\cite[Lemma 4.3]{Berz}\label{lemma:reg}
Let $(a_n)$ and $(b_n)$ be regular sequences in
$\R$ (resp. $\C$). Then the sequence of the sums, the sequence of the products, any rearrangement,
as well as any subsequence of one of the sequences are regular.
\end{lemma}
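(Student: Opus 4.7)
My plan is to reduce each of the four cases (subsequence, rearrangement, sum, product) to an elementary statement about the union of supports, and then verify left-finiteness directly from the definition. For a subsequence $(a_{n_k})$, one has $\bigcup_k \supp(a_{n_k}) \subseteq \bigcup_n \supp(a_n)$, and every subset of a left-finite set is trivially left-finite, since fewer elements can lie below any given $r \in \Bbb Q$. For a rearrangement $(a_{\sigma(n)})$, the union over $n$ equals $\bigcup_n \supp(a_n)$ as a set, so regularity is tautological.

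For the sum, I would use the inclusion $\supp(a_n + b_n) \subseteq \supp(a_n) \cup \supp(b_n)$, which follows immediately from the component-wise definition of addition. Therefore
\[
\bigcup_n \supp(a_n+b_n) \;\subseteq\; \Bigl(\bigcup_n \supp(a_n)\Bigr) \,\cup\, \Bigl(\bigcup_n \supp(b_n)\Bigr).
\]
Each of the two sets on the right has only finitely many elements below any $r \in \Bbb Q$, so the same holds for their union.

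The main work is the product case. The convolution formula for multiplication shows that $(a_n b_n)[q] \ne 0$ forces $q = q_a + q_b$ for some $q_a \in \supp(a_n)$, $q_b \in \supp(b_n)$. Hence
\[
\bigcup_n \supp(a_n b_n) \;\subseteq\; A + B,
\]
where $A := \bigcup_n \supp(a_n)$, $B := \bigcup_n \supp(b_n)$, and $A+B := \{\alpha + \beta : \alpha \in A,\; \beta \in B\}$. The remaining (and essentially only nontrivial) task is to show that the Minkowski sum of two left-finite subsets of $\Bbb Q$ is left-finite. If either set is empty this is immediate, so assume both are nonempty. Left-finiteness guarantees that $\alpha_0 := \min A$ and $\beta_0 := \min B$ exist. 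Given any $r \in \Bbb Q$, a sum $\alpha + \beta < r$ forces $\alpha < r - \beta_0$ and $\beta < r - \alpha_0$; by left-finiteness of $A$ and $B$ only finitely many such $\alpha$ and finitely many such $\beta$ exist, leaving only finitely many sums below $r$.

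I do not expect a serious obstacle: the argument is essentially set-theoretic once the support containments are noted. The subtle point is in the product case, where one must exploit the \emph{existence of minima} of left-finite sets to bound $\alpha$ and $\beta$ separately. This is precisely why left-finiteness (rather than a weaker condition such as each finite initial segment being finite) is exactly the right hypothesis to make regularity compatible with the convolution product.
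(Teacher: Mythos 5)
Your proof is correct, and since the paper cites this result from Berz et al.\ without reproducing a proof, there is nothing in the paper to compare against; what you give is the standard argument. The reduction of each case to a statement about unions of supports, together with the observation that a nonempty left-finite set has a minimum (take any element $\alpha_1$; the set of members below $\alpha_1+1$ is finite and nonempty, hence has a least element, which is $\min A$), is exactly what makes the Minkowski-sum step in the product case go through. Your closing remark correctly identifies the crux: left-finiteness gives both a global lower bound and finiteness of initial segments, and both are needed to bound $\alpha$ and $\beta$ separately from $\alpha+\beta<r$.
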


\begin{lemma}\cite[Lemma 4.4]{Berz}\label{lemma:strreg}
Every strongly convergent sequence is regular.
\end{lemma}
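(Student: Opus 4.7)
My plan is to unravel the definitions of strong convergence and of left-finite support and see that they fit together immediately. Suppose $(a_n)$ is a strongly convergent sequence in $\R$ (resp. $\C$) with strong limit $a$. To establish regularity I must show that, for each fixed $r \in \mathbb{Q}$, only finitely many elements of $U := \bigcup_{n \in \mathbb{N}} \supp(a_n)$ are strictly smaller than $r$.

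The key step is to apply strong convergence to the positive infinitesimal $\epsilon := d^r$, which lies in $\R^+$ since $d^r$ has single support point $r$ with coefficient $1$, hence $d^r \succ 0$. Strong convergence yields $N_0 \in \mathbb{N}$ such that $|a_n - a| \prec d^r$ for all $n \ge N_0$. I would then translate this as $\lambda(a_n - a) > r$, using that in both fields $\lambda(|x|) = \lambda(x)$; equivalently, $(a_n - a)[q] = 0$ for every $q \le r$. In particular
$$
\supp(a_n) \cap (-\infty, r] \;=\; \supp(a) \cap (-\infty, r] \qquad \text{for all } n \ge N_0,
$$
and the right-hand side is a finite set, since $a \in \R$ (resp. $\C$) has left-finite support by definition.

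For the finitely many indices $n < N_0$, each set $\supp(a_n) \cap (-\infty, r)$ is finite by the left-finiteness of the individual support $\supp(a_n)$. Thus $U \cap (-\infty, r)$ is a finite union of finite sets, hence finite; since $r$ was arbitrary, $U$ is left-finite, which is exactly regularity. I do not anticipate any real obstacle; the only point deserving a word of care is the equivalence $|x| \prec d^r \iff \lambda(x) > r$, which in the complex case uses the representation $|z| = \sqrt{a^2 + b^2}$ together with $\lambda(\sqrt{a^2 + b^2}) = \min(\lambda(a), \lambda(b)) = \lambda(z)$. Once that is verified, the argument is essentially a three-line translation of definitions.
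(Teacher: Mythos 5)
Your approach is the natural definitional unwinding, and it is essentially sound; the paper itself only cites this as \cite[Lemma 4.4]{Berz} without reproducing a proof, so there is no in-paper argument to compare against. One small slip is worth fixing. From $|a_n-a|\prec d^{r}$ you may only conclude $\lambda(a_n-a)\ge r$, not $\lambda(a_n-a)>r$: for instance $a_n-a=\tfrac{1}{2}d^{r}$ satisfies $\bigl|\tfrac{1}{2}d^{r}\bigr|\prec d^{r}$ while $\lambda\bigl(\tfrac{1}{2}d^{r}\bigr)=r$. Consequently the identity you write with the closed half-line, $\supp(a_n)\cap(-\infty,r]=\supp(a)\cap(-\infty,r]$, need not hold at the endpoint $q=r$. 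This is harmless but should be repaired in one of two equivalent ways: either replace $(-\infty,r]$ by the open interval $(-\infty,r)$ throughout (which is all that left-finiteness requires, as you correctly note at the start when asking for ``finitely many elements strictly smaller than $r$''), or apply strong convergence with $\epsilon=d^{r'}$ for some fixed rational $r'>r$ (say $r'=r+1$), which does yield $\lambda(a_n-a)\ge r'>r$ and hence the closed-interval identity. Your side remark on the complex case is correct: for $z=a+ib$ one has $\supp(z)=\supp(a)\cup\supp(b)$, $\lambda(z)=\min(\lambda(a),\lambda(b))$, and $\lambda(a^2+b^2)=2\min(\lambda(a),\lambda(b))$ because the leading coefficients of $a^2$ and $b^2$ cannot cancel, so $\lambda(|z|)=\lambda(z)$. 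With the interval adjustment the proof is complete.
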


\begin{lemma}\label{lemma::continuityofwc}
Let $(a_n)$ and $(b_n)$ be two sequences in $\R$ (resp. $\C$) converging weakly to $a$ and $b$, respectively. Then $a_n+b_n\wkto a+b$. Moreover, if $(a_n)$ and $(b_n)$ are both regular, then  $a_nb_n\wkto ab$.
\end{lemma}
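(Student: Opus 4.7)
For the additive statement, I plan to work directly from the definition of weak convergence. Given $r>0$, I will apply the definition with $r/2$ in place of $r$ to both sequences, obtaining an index $N$ past which $\|a_n - a\|_{2/r} < r/2$ and $\|b_n - b\|_{2/r} < r/2$. Because the seminorm $\|\cdot\|_s$ is monotone in $s$ (the supremum is taken over a smaller set for smaller $s$) and satisfies a triangle inequality, the bound $\|(a_n+b_n)-(a+b)\|_{1/r} \le \|a_n-a\|_{2/r} + \|b_n-b\|_{2/r} < r$ will follow at once. No regularity is needed at this stage.

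For the multiplicative statement, my plan is to invoke the Convergence Criterion (Theorem \ref{thm:weakconvergence}). Regularity of the product sequence $(a_n b_n)$ comes for free from Lemma \ref{lemma:reg}, so part (2) of the criterion is applicable and it suffices to establish pointwise convergence $(a_n b_n)[q] \to (ab)[q]$ for every $q \in \Q$. I would set $S = \cup_n \supp a_n$ and $T = \cup_n \supp b_n$; both are left-finite by regularity, and part (1) of the criterion applied to $(a_n)$ and $(b_n)$ shows in particular that $\supp a \subseteq S$ and $\supp b \subseteq T$. Consequently, both convolutions $(a_n b_n)[q]$ and $(ab)[q]$ reduce to a sum over the same finitely many pairs $(q_a,q_b) \in S \times T$ with $q_a + q_b = q$, and the coordinatewise convergences $a_n[q_a] \to a[q_a]$ and $b_n[q_b] \to b[q_b]$ will complete the pointwise limit. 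Part (2) of the criterion then yields $a_n b_n \wkto ab$.

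The hardest step, and the reason regularity must be assumed for the product, is the reduction of each coefficient $(a_n b_n)[q]$ to a finite convolution whose index set does not depend on $n$. Without regularity, $(a_n b_n)[q]$ could in principle involve infinitely many pairs of coordinates, and pointwise convergence of the factors would not obviously transfer to that of the products. The additive half carries no analogous difficulty, since $(a_n+b_n)[q]=a_n[q]+b_n[q]$ is a single two-term sum by definition.
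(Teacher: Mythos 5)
Your proof is correct and self-contained. Note that the paper does not actually supply its own argument for this lemma---it simply points to Shamseddine's thesis (Lemma 4.17 and Theorem 4.8 there) for the $\R$ case and asserts the $\C$ case runs in parallel---so your write-up fills a genuine gap using only the tools the paper has already introduced. Both halves are sound. For the sum, the monotonicity of $\|\cdot\|_s$ in $s$ together with the triangle inequality for the semi-norm gives $\|(a_n+b_n)-(a+b)\|_{1/r}\le\|a_n-a\|_{2/r}+\|b_n-b\|_{2/r}<r$, and in fact with Proposition~\ref{prop:weakconvergence} in hand you could skip the monotonicity step and estimate $\|\cdot\|_{1/r}$ directly. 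For the product, the key observations---that $(a_n b_n)$ is regular by Lemma~\ref{lemma:reg}, that $\supp a\subseteq S:=\cup_n\supp a_n$ and $\supp b\subseteq T:=\cup_n\supp b_n$ by part (1) of Theorem~\ref{thm:weakconvergence}, and that for each fixed $q$ the convolution runs over the fixed finite set $\{(q_a,q_b)\in S\times T:q_a+q_b=q\}$ (finite because $q_b\ge\min T$ forces $q_a\le q-\min T$ and $S$ is left-finite)---are exactly what is needed to pass the coefficientwise limits through the finite sum and invoke part (2) of the criterion. You also correctly identify why regularity is needed for the product but not the sum. This is the natural route and almost certainly the one the cited reference takes.
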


\begin{proof}
The proof for $\R$ is presented in \cite[Lemma 4.17 and Theorem 4.8]{Shamseddinethesis}, while the proof for $\C$ follows exactly the same outline.
\end{proof}

\begin{lemma}\cite[Corollary 3.2]{SBAnalytical}\label{lem:powerreg}
The sequence $(a^k)$ is regular for $a$ in $\R$ (resp. $\C$) if and only if $\lambda(a)\ge 0$.
\end{lemma}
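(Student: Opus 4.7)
The plan is to prove both directions directly from the definitions of regularity and of $\lambda$, together with the multiplicativity $\lambda(a\cdot b)=\lambda(a)+\lambda(b)$.

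For the easy direction, suppose $\lambda(a)<0$. Then $\lambda(a^k)=k\lambda(a)\to -\infty$, so $\bigcup_{k\in\mathbb N}\supp(a^k)$ contains the values $k\lambda(a)$, which form an unbounded below set of rationals. Hence for any rational $r$ there are infinitely many indices $k$ with $k\lambda(a)<r$, so the union of supports fails to be left-finite and $(a^k)$ is not regular.

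For the nontrivial direction, assume $\lambda(a)\ge 0$ and fix $r\in\mathbb Q$; I need to show $\bigcup_k\supp(a^k)\cap(-\infty,r]$ is finite. Every element of $\supp(a^k)$ is a sum of $k$ elements of $\supp(a)$, each of which is $\ge \lambda(a)\ge 0$. If $\lambda(a)>0$, then $\supp(a^k)\subseteq[k\lambda(a),\infty)$, and for $k>r/\lambda(a)$ the intersection with $(-\infty,r]$ is empty. For the finitely many remaining $k$, each $\supp(a^k)$ is itself left-finite (since $a^k\in\C$), so only finitely many of its elements lie $\le r$, and the union is finite. If instead $\lambda(a)=0$, set $T=\supp(a)\cap[0,r]$, which is finite by left-finiteness of $\supp(a)$. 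Any element of $\supp(a^k)\cap[0,r]$ is a sum of $k$ terms from $\supp(a)\cap[0,\infty)$ whose total is $\le r$, so each summand lies in $T$. Writing $\delta=\min(T\setminus\{0\})>0$ (assuming $T\ne\{0\}$; otherwise the intersection is trivially $\{0\}$), at most $\lfloor r/\delta\rfloor$ of the summands can be nonzero; zeros contribute nothing. Hence the set of possible sums, taken over all $k$, is contained in the finite set of all sums of at most $\lfloor r/\delta\rfloor$ elements of $T\setminus\{0\}$, and so is finite.

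The main obstacle is the edge case $\lambda(a)=0$: here individual supports do not drift to $+\infty$ with $k$, so regularity cannot be gotten by a size-of-$\lambda(a^k)$ argument alone. The key observation is that nonzero entries of $\supp(a)$ below $r$ are bounded away from $0$ by a positive constant $\delta$, which caps the number of nonzero summands and reduces the problem to a finite combinatorial count. Combining the two cases yields left-finiteness of the union for every $r\in\mathbb Q$, i.e.\ regularity of $(a^k)$.
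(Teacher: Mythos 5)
The paper does not give its own proof of this lemma; it is cited from [SBAnalytical, Corollary~3.2], so there is no in-paper argument to compare against, and I will only assess the proposal on its own merits. Your proof is correct and self-contained. The easy direction via $\lambda(a^k)=k\lambda(a)\to-\infty$ is fine, and the split of the converse into $\lambda(a)>0$ (where $\supp(a^k)\subseteq[k\lambda(a),\infty)$ eventually clears any threshold $r$) and $\lambda(a)=0$ is the right one. The crucial observation in the boundary case --- that the positive support points of $a$ lying in $[0,r]$ form a finite set $T$ bounded away from zero by some $\delta>0$, so any sum of elements of $\supp(a)$ that stays $\le r$ can have at most $\lfloor r/\delta\rfloor$ nonzero summands, all drawn from the finite set $T\setminus\{0\}$ --- is exactly what yields left-finiteness of $\bigcup_k\supp(a^k)$. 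The inclusion $\supp(a^k)\subseteq\{q_1+\cdots+q_k : q_i\in\supp(a)\}$ that you use implicitly is immediate from the definition of multiplication, and you correctly treat the degenerate subcase $T=\{0\}$. This is the natural elementary proof from the definitions; the cited reference derives the statement within a broader development of power series over Levi-Civita fields, but for this lemma in isolation the direct combinatorial route you take is arguably cleaner.
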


The following Theorem can be considered as a main ingredient of the proof of power iteration method:

\begin{theorem}\cite[Power Series with Purely Real or Complex Coefficients]{SBAnalytical}\label{thm::powerseries}

Let $\sum_{k=0}^{\infty}s_k X^k$be a power series with purely real (resp. complex) coefficients and with the classical radius of convergence equal to $\eta$. Let $x$ in $\R$ (resp. $\C$), and let $a_n(x)=\sum_{k=0}^{n}s_k x^k$  in $\R$ (resp. $\C$). Then, for $|x|<\eta$ and $|x|\not\approx \eta $ the sequence $(a_n(x))$ converges weakly.
\end{theorem}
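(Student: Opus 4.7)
The plan is to invoke the Convergence Criterion for Weak Convergence, Theorem \ref{thm:weakconvergence}, which reduces the statement to two separate checks: (i) regularity of the sequence $(a_n(x))$, and (ii) convergence in $\mathbb{R}$ (resp.\ $\mathbb{C}$) of every coefficient sequence $(a_n(x)[q])$, $q\in\Q$.

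For regularity, I would first observe that the hypothesis $|x|<\eta$ forces $\lambda(x)\ge 0$: a negative $\lambda(x)$ would make $|x|$ infinitely large, contradicting comparability with the positive real $\eta$. Lemma \ref{lem:powerreg} then gives that $(x^k)$ is regular, and since $\supp(a_n(x))\subseteq\bigcup_{k\le n}\supp(x^k)$, regularity of $(a_n(x))$ follows immediately.

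For (ii), fix $q\in\Q$. In the easy subcase $\lambda(x)>0$, $x$ is infinitesimal, so $\lambda(x^k)=k\lambda(x)\to\infty$ and $x^k[q]=0$ for all $k>q/\lambda(x)$; the sum $a_n(x)[q]$ therefore stabilizes in $n$ and trivially converges. In the main subcase $\lambda(x)=0$, write $x=r+y$ with $r=x[0]$ and $\lambda(y)>0$. A short inspection of the leading term of $|x|$ shows that $|x|<\eta$ together with $|x|\not\approx\eta$ implies the \emph{classical} strict inequality $|r|<\eta$. Expanding
\[
x^k=\sum_{j=0}^k \binom{k}{j}r^{k-j}y^j
\]
and using $y^j[q]=0$ for $j\lambda(y)>q$, only finitely many $j$ contribute; interchanging the order of summation yields
\[
a_n(x)[q]=\sum_{j=0}^{J}y^j[q]\sum_{k=j}^n s_k\binom{k}{j}r^{k-j},\qquad J:=\lfloor q/\lambda(y)\rfloor.
\]
For each fixed $j$ the inner series is, up to the factor $1/j!$, the $j$-th derivative of $\sum_k s_kX^k$ evaluated at $r$, which converges absolutely in $\mathbb{C}$ because $|r|<\eta$ lies strictly inside the classical disc of convergence. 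Hence $a_n(x)[q]$ converges.

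The step I expect to be the main obstacle is the implication ``$|x|\not\approx\eta$ and $|x|<\eta$ imply $|r|<\eta$ in $\mathbb{R}$'', which requires computing the leading coefficient of $|x|=\sqrt{a^2+b^2}$ for $x=a+ib\in\C$, verifying $\lambda(|x|)=\lambda(x)$, and exploiting the definitions of $\approx$ and of the order on $\R$ to rule out the boundary case $|r|=\eta$. Once this is established, Theorem \ref{thm:weakconvergence}, part~(2), combines (i) and (ii) to give the weak convergence of $(a_n(x))$.
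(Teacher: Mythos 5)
This theorem is not proved in the paper at all: it is imported verbatim from Shamseddine and Berz's work on power series over Levi-Civita fields (the \cite{SBAnalytical} reference), so there is no ``paper's own proof'' to compare against. That said, your argument is a correct and self-contained proof, and in fact it follows essentially the same route as the original Shamseddine--Berz argument: reduce to the weak convergence criterion, establish regularity via $\lambda(x)\ge 0$ and the regularity of $(x^k)$, and then prove coefficient-wise convergence by splitting $x$ into its complex part $r$ plus an infinitesimal $y$, truncating the binomial expansion at $J=\lfloor q/\lambda(y)\rfloor$, and recognizing the inner sums as the derivatives $f^{(j)}(r)/j!$ of the classical power series, which converge absolutely because $|r|<\eta$.

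Two small points worth tidying. First, when $\lambda(x)=0$ you should treat the degenerate case $y=0$ separately (then $x=r\in\mathbb C$ and coefficient convergence is just classical convergence at $q=0$), since $\lambda(y)$ and hence $J$ are undefined when $y$ vanishes. Second, your key reduction ``$|x|<\eta$ and $|x|\not\approx\eta$ imply $|r|<\eta$'' is exactly right, and the computation you flag as the main obstacle is short: for $\lambda(x)=0$ one has $\lambda(|x|)=0$ and $|x|[0]=|x[0]|=|r|$ (using $|x|=\sqrt{a^2+b^2}$ and $\lambda(a^2+b^2)=0$), so $|x|\prec\eta$ rules out $|r|>\eta$ and $|x|\not\approx\eta$ rules out $|r|=\eta$, forcing the strict classical inequality. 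With those two clarifications the proof is complete.
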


\section{Properties of weak convergence}
 In this section, we prove auxiliary results on weak  convergence over the Levi-Civita fields. The most interesting results here are Theorem \ref{thm::regularityOfinverse} and Theorem \ref{thm::squareroot}, which show us the continuity of weak convergence with respect to taking inverses and roots in some cases. These theorems are crucial in the proof of power iteration algorithm.

Firstly, we formulate an equivalent definition of weak convergence.
\begin{proposition}\label{prop:weakconvergence}
Let $(a_n)$ in $\R$ (resp. $\C$) be a sequence. Then $a=wk\lim_{n\to \infty}a_n$ if and only if
there exists $a\in \R$ (resp. $\C$) such that for all $r_0,r_1>0$ in $\Bbb R$, there exists $N_0\in \Bbb N$ such that
$$
\|a_n-a\|_{r_0}<r_1\mbox{ for all }n>N_0.
$$

\end{proposition}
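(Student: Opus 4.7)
The plan is to prove the two formulations are equivalent by exploiting the obvious monotonicity of the seminorm in its index, namely that $\|x\|_s \le \|x\|_t$ whenever $s \le t$ (since enlarging the threshold only adds more coefficients to the supremum). This monotonicity is immediate from the definition of $\|\cdot\|_r$, so I will just invoke it without further comment.

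For the direction from the proposition to Definition \ref{def:weakconvergence}, the argument is cosmetic: given $r > 0$, I specialize the two-parameter statement at $r_0 = 1/r$ and $r_1 = r$ to recover the original condition verbatim.

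For the substantive direction, assume that $a = wk\lim_{n\to\infty} a_n$ in the sense of Definition \ref{def:weakconvergence}, and fix arbitrary reals $r_0, r_1 > 0$. The idea is to pick a single real number $r > 0$ which controls both sides simultaneously; the natural choice is
\[
r = \min\!\left(\frac{1}{r_0},\, r_1\right) > 0.
\]
Applying Definition \ref{def:weakconvergence} to this $r$, I obtain $N_0 \in \mathbb{N}$ such that $\|a_n - a\|_{1/r} < r$ for all $n > N_0$. Because $r \le 1/r_0$ gives $1/r \ge r_0$, monotonicity yields $\|a_n - a\|_{r_0} \le \|a_n - a\|_{1/r}$, and since also $r \le r_1$, I conclude $\|a_n - a\|_{r_0} < r_1$ for every $n > N_0$, which is exactly the required condition.

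I expect no real obstacle here; the only thing one must notice is that a single choice of $r$ in the original definition has enough slack to swallow both independent parameters $r_0$ and $r_1$ of the new formulation, which is precisely why one takes their ``minimum after inversion.''
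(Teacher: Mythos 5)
Your proof is correct and is essentially the paper's argument: the ``if'' direction is the identical specialization $r_0 = 1/r$, $r_1 = r$, and your choice $r = \min(1/r_0, r_1)$ in the other direction unifies the paper's two cases ($r_0 r_1 < 1$, where the paper takes $r = r_1$, and $r_0 r_1 \ge 1$, where it takes $r = 1/r_0$) into a single application of the definition plus monotonicity of $\|\cdot\|_r$. The packaging is a bit cleaner, but the underlying idea and estimates are the same.
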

\begin{proof}
The ``if'' direction is clear by taking $r_0=1/r, r_1=r.$

For the ``only if direction'' let us consider two cases. If $r_1 r_0<1$ we can estimate
$$
\|a_n-a\|_{r_0}=\sup_{q_i\le r_0} |a_n[q_i]-a[q_i]|\le \sup_{q_i\le 1/r_1} |a_n[q_i]-a[q_i]|=\|a_n-a\|_ {1/r_1}<r_1.
$$
If  $r_1 r_0\ge 1$ we can argue in the following way:
$$
\|a_n-a\|_{r_0}<1/r_0\le r_1.
$$
\end{proof}

	\begin{lemma}
    \label{lemma::cauchy}
		For a weakly convergent series $\sum_{k=0}^{\infty} a_k$  in $\R$ (resp. $\C$) the sequence $(a_k)$ converges weakly to zero.
	\end{lemma}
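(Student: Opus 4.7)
The plan is to mimic the classical argument: if a series converges, its terms tend to zero, which follows immediately from the Cauchy criterion applied to consecutive partial sums.

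First I would introduce the partial sums $S_n=\sum_{k=0}^{n} a_k$. By hypothesis, $(S_n)$ converges weakly to some $S\in\R$ (resp.\ $\C$). Then I would invoke Lemma~\ref{lem::wkCauchy} to conclude that $(S_n)$ is a weakly Cauchy sequence, i.e.\ for every $r>0$ in $\mathbb{R}$ there exists $N_0\in\mathbb{N}$ such that
$$
\|S_n-S_m\|_{1/r}<r\quad\text{for all }n,m\ge N_0.
$$

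Next I would specialize this by taking $m=n-1$ for $n\ge N_0+1$. Since $a_n=S_n-S_{n-1}$, this gives
$$
\|a_n\|_{1/r}=\|S_n-S_{n-1}\|_{1/r}<r,
$$
which is exactly the condition in Definition~\ref{def:weakconvergence} for $(a_n)$ to converge weakly to $0$.

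There is essentially no obstacle here; the only subtlety is making sure we quote the correct direction of Lemma~\ref{lem::wkCauchy} (weak convergence $\Rightarrow$ weakly Cauchy), which is precisely the statement of that lemma. So the proof is a two-line deduction from the partial-sum decomposition and the weak Cauchy property.
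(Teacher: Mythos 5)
Your proof is correct and essentially mirrors the paper's: both reduce the claim to the weak Cauchy property of the partial sums together with the identity $a_n = S_n - S_{n-1}$. The only (cosmetic) difference is that the paper re-derives the Cauchy estimate directly via the triangle inequality applied to $\|s_n - s + s - s_{n+1}\|_{r_0}$, using the two-parameter reformulation of weak convergence from Proposition~\ref{prop:weakconvergence}, whereas you simply cite Lemma~\ref{lem::wkCauchy} and specialize to consecutive indices. Either route is fine.
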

	\begin{proof}
		Let $s_n:=\sum_{k=0}^{n} a_k$ and $s=wk\lim_{n\to \infty}a_n$ be the limit. By Proposition \ref{prop:weakconvergence}, for any $r_0, r_1>0$ in $\mathbb R$ there is a $N\in \mathbb N$ such that $\norm{s_n-s}_{r_0}<r_1/2$ for all $n\geq N$. Therefore, for any $n \ge N$ we have,  
		\begin{align*}
			\norm{s_n-s_{n+1}}_{r_0}=\norm{s_n-s+s-s_{n+1}}_{r_0}\leq\norm{s_n-s}_{r_0}+\norm{s_{n+1}-s}_{r_0}\leq r_1.
		\end{align*}
	\end{proof}

\begin{lemma}\label{lemma::convofpowers}
Let $a\in \R$ (resp. $\C$). Then for the sequence $(a^k)_{k\in \Bbb N}$ the following assertions hold:
\begin{enumerate}
\item
If  $\lambda(a)>0$, then $wk\lim_{k\to \infty}a^k=0$. Moreover, $(a^k)$ is regular.
\item
If  $\lambda(a)=0$ and $\left|a[0]\right|<1$, then $wk\lim_{k\to \infty}a^k=0$. Moreover, $(a^k)$ is regular.
\end{enumerate}

\end{lemma}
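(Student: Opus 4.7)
The plan is to derive both assertions from a single application of Theorem \ref{thm::powerseries} to the geometric series $\sum_{k=0}^{\infty} X^k$, whose classical radius of convergence is $\eta=1$. Regularity of $(a^k)$ is immediate in both cases: the hypotheses $\lambda(a)>0$ and $\lambda(a)=0$ both yield $\lambda(a)\ge 0$, so Lemma \ref{lem:powerreg} applies. For the weak convergence, once I verify that $|a|\prec 1$ and $|a|\not\approx 1$, Theorem \ref{thm::powerseries} will tell me that the partial sums $\sum_{k=0}^{n} a^k$ converge weakly, and then Lemma \ref{lemma::cauchy} forces the summands $a^k$ themselves to converge weakly to $0$.

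The main technical step is therefore to pin down $\lambda(|a|)$ and the leading coefficient $|a|[\lambda(|a|)]$. Over $\R$ this is trivial since $|a|=\pm a$. Over $\C$, I would write $a=u+iv$ with $u,v\in\R$, so that $|a|^2=u^2+v^2\succeq 0$. A short case analysis on whether $\lambda(u)<\lambda(v)$, $\lambda(u)>\lambda(v)$, or $\lambda(u)=\lambda(v)$ shows that no cancellation can occur at the exponent $2\lambda(a)$ and that
\[
\lambda(u^2+v^2)=2\lambda(a),\qquad (u^2+v^2)[2\lambda(a)]=|a[\lambda(a)]|^2,
\]
where $|a[\lambda(a)]|$ denotes the usual complex modulus. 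Since $|a|$ is the unique non-negative square root of $u^2+v^2$ in $\R$, matching leading terms gives $\lambda(|a|)=\lambda(a)$ and $|a|[\lambda(|a|)]=|a[\lambda(a)]|$.

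Armed with this, both cases are straightforward. In case $(1)$, $\lambda(|a|)=\lambda(a)>0=\lambda(1)$, so $\lambda(1-|a|)=0$ with $(1-|a|)[0]=1>0$; hence $|a|\prec 1$, and the strict inequality of leading exponents already gives $|a|\not\approx 1$. In case $(2)$, $\lambda(|a|)=0=\lambda(1)$ and $|a|[0]=|a[0]|<1$, so $(1-|a|)[0]=1-|a[0]|>0$, giving $|a|\prec 1$ and also $|a|\not\approx 1$ because the leading coefficients at exponent $0$ differ. In both cases Theorem \ref{thm::powerseries} applied to $\sum_{k=0}^{\infty} X^k$ yields weak convergence of the partial sums $\sum_{k=0}^{n}a^k$, and Lemma \ref{lemma::cauchy} concludes that $a^k\wkto 0$.

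I do not anticipate a serious obstacle; the only delicate point is the bookkeeping for $|a|$ in the complex case, in particular ruling out cancellation in $u^2+v^2$ at the exponent $2\lambda(a)$. As a side remark, case $(1)$ admits an even shorter direct argument: since $\lambda(a^k)=k\lambda(a)\to\infty$, for every $r\in\Q$ one has $\|a^k\|_r=0$ for all sufficiently large $k$, so $a^k\to 0$ even \emph{strongly}, and weak convergence then follows from the preceding lemma on strong implying weak convergence.
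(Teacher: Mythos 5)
Your proof is correct, and it differs from the paper's proof in two instructive ways. First, you unify both cases under a single application of Theorem \ref{thm::powerseries} (geometric series, radius $\eta=1$) plus Lemma \ref{lemma::cauchy}, whereas the paper handles case $(1)$ separately via the much shorter strong-convergence argument $\lambda(a^k)=k\lambda(a)\to+\infty$ and only invokes Theorem \ref{thm::powerseries} for case $(2)$; your closing side remark is exactly the paper's route for $(1)$. Second, and more substantively, you spell out the verification that $\lambda(|a|)=\lambda(a)$ and $|a|[\lambda(|a|)]=|a[\lambda(a)]|$ for $a\in\C$ by decomposing $a=u+iv$ and checking that no cancellation can occur in $u^2+v^2$ at exponent $2\lambda(a)$. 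The paper's proof of case $(2)$ merely says the series is a geometric series ``for real $a$'' and does not explicitly justify the hypotheses $|a|\prec 1$, $|a|\not\approx 1$ of Theorem \ref{thm::powerseries} in the complex case, so your bookkeeping here supplies a detail the published argument glosses over. Both approaches are valid; the paper's treatment of case $(1)$ is more economical, while your unified treatment is cleaner structurally and your case analysis for $|\cdot|$ on $\C$ makes the appeal to Theorem \ref{thm::powerseries} fully rigorous.
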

\begin{proof}
\begin{enumerate}
\item
We see that $a^k$ converges strongly to zero, since $\lambda(a^k)=k\lambda(a)\to+\infty$ and the regularity of $(a^k)$ follows from Lemma \ref{lemma:strreg}.
\item
Let $q_0=\lambda(a)=0$. Applying Theorem \ref{thm::powerseries}, we get that the series $\sum_{k=0}^\infty a^k$ is absolutely weakly convergent since it is a geometric series for real $a$. Therefore, by Lemma \ref{lemma::cauchy}, $a^k\wkto 0$.
Further, the sequence $(a^k)$ is regular by Lemma \ref{lem:powerreg}.
\end{enumerate}
\end{proof}

Now we prove Theorem \ref{thm::regularityOfinverse} and Theorem \ref{thm::squareroot}, which are crucial for the proof of convergence of power iteration algorithm, presented in Section~4.

\begin{theorem}\label{thm::regularityOfinverse}
Let $(a_k)_{k\in \Bbb N}\subset \Bbb \R$ and $a_k \wkto a$ as $k\to \infty$. Let $ a[0]\ne 0$ and the following conditions holds:
\begin{itemize}
\item
all $(a_k)_{k\in \Bbb N}$ and $a$ are at most finite,
\item
$(a_k)_{k\in \Bbb N}$ is regular.
\end{itemize}
Then 
$$
\dfrac{1}{a_k}\wkto \dfrac{1}{a}.  
$$
\end{theorem}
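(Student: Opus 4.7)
The plan is to expand each inverse as a Neumann series and apply the Convergence Criterion (Theorem~\ref{thm:weakconvergence}) to the sequence $(1/a_k)$. Since $a$ is at most finite with $a[0] \neq 0$, we have $\lambda(a) = 0$. By Theorem~\ref{thm:weakconvergence}(1) applied to $a_k \wkto a$, the coefficients $a_k[0] \to a[0] \neq 0$, so for all sufficiently large $k$ the element $a_k$ satisfies $a_k[0] \neq 0$ and hence $\lambda(a_k) = 0$; since weak convergence is a tail property, I may assume this holds for every $k$. Factoring $a_k = a_k[0](1+b_k)$ and $a = a[0](1+b)$ with $\lambda(b_k), \lambda(b) > 0$, the Neumann series then yields
\[
\frac{1}{a_k} = \frac{1}{a_k[0]} \sum_{j=0}^\infty (-b_k)^j, \qquad \frac{1}{a} = \frac{1}{a[0]} \sum_{j=0}^\infty (-b)^j,
\]
each converging strongly because $\lambda((-b_k)^j) = j\lambda(b_k) \to \infty$ and similarly for $b$.

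First I verify regularity of $(1/a_k)$. By the regularity hypothesis, $S := \bigcup_k \supp(a_k)$ is a left-finite subset of $[0, \infty)$, so $T := S \setminus \{0\}$ is left-finite in $(0, \infty)$ and admits a positive minimum $\mu$. Since $\supp(b_k) \subset T$ for every $k$, we have $\supp((-b_k)^j) \subset T + \cdots + T \subset [j\mu, \infty)$, which is left-finite (a $j$-fold sum of a left-finite set bounded below by $\mu > 0$). For any rational $r$, only finitely many $j$ satisfy $j\mu \leq r$, so the union $U := \{0\} \cup \bigcup_{j \geq 1}(T + \cdots + T)$ is left-finite, and $\supp(1/a_k) \subset U$ uniformly in $k$.

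Second, for coefficient convergence at a fixed $q \in \Q$, only indices $j \leq q/\mu$ contribute to $(1/a_k)[q]$, truncating the Neumann series to a finite sum. Each term $((-b_k)^j)[q]$ is itself a finite sum indexed by tuples $(t_1,\ldots,t_j) \in T^j$ with $t_1 + \cdots + t_j = q$ (finitely many by left-finiteness of $T$), whose factors $b_k[t_i] = a_k[t_i]/a_k[0]$ converge to $b[t_i] = a[t_i]/a[0]$ by Theorem~\ref{thm:weakconvergence}(1) together with $a_k[0] \to a[0] \neq 0$. Dividing by $a_k[0] \to a[0]$ then gives $(1/a_k)[q] \to (1/a)[q]$, and Theorem~\ref{thm:weakconvergence}(2) delivers $1/a_k \wkto 1/a$. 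The main obstacle I foresee is the uniform support bookkeeping needed for regularity; once that is in hand, the remaining coefficient analysis is a routine finite calculation.
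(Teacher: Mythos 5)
Your proof is correct, and it takes a genuinely different route from the paper's. The paper establishes regularity of $(1/a_k)$ by fixing a cutoff $\bar q$, splitting $a_k = P_k + \alpha_k$ into a low part supported in $(-\infty,\bar q]$ and a high remainder, and exploiting the identity $b_k + b_k\alpha_k/P_k = 1/P_k$ (with $b_k = 1/a_k$) to show that below $\bar q$ the support of $1/a_k$ agrees with that of $1/P_k$, which is controlled via its Maclaurin expansion in $d^{1/p}$; it then obtains coefficient convergence by setting $c_k = 1/a_k - 1/a$, observing $a_kc_k \wkto 0$, and running an induction over the common support points $q_i$ to peel off one coefficient at a time and show $c_k[q_i] \to 0$. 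You instead normalize to $a_k = a_k[0](1+b_k)$ and expand $1/a_k$ as a strongly convergent Neumann series; the single key observation doing all the work is that regularity of $(a_k)$ forces a uniform positive gap $\mu = \min\big(\bigcup_k\supp(a_k)\setminus\{0\}\big)$, which simultaneously confines $\supp(b_k^j)$ to $[j\mu,\infty)$ uniformly in $k$ (giving regularity of $(1/a_k)$) and truncates the series to finitely many terms below any rational cutoff $q$ (reducing $(1/a_k)[q] \to (1/a)[q]$ to continuity of a finite rational expression in the converging scalars $a_k[t_i]$, $a_k[0]$). Your approach is shorter and avoids the auxiliary $c_k$-induction entirely; the paper's cutoff-and-Maclaurin machinery is more elaborate here but is the version that the authors then reuse for the square-root analogue in Theorem~\ref{thm::squareroot}, where a clean Neumann-type expansion is not available. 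One small point worth making explicit in your write-up: $\supp(1/a) \subset U$ as well (by the same Neumann argument applied to $a$), so that for $q \notin U$ both sides of $(1/a_k)[q] \to (1/a)[q]$ vanish identically, completing the hypotheses of Theorem~\ref{thm:weakconvergence}(2).
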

\begin{proof}
Firstly we prove, that the sequence $(b_k)_{k\in \Bbb N}$, where $b_k=\dfrac{1}{a_k}$, is regular. Let $\bar q\in \mathbb Q$ be fixed. To prove that $(b_k)$ is regular, we need to prove that the set 
$$
\{q\le\bar q\;\mid\; q\in \supp(b_k),k\in \mathbb N\}
$$
is finite. Let us write $a_k$ as
$$
a_k=P_k+\alpha_k,
$$
where $\supp P_k\subset(-\infty,\bar q]$ and $\supp \alpha_k \subset (\bar q,+\infty)$.  Further,
$$
b_k=\dfrac{1}{a_k}=\dfrac{1}{P_k+\alpha_k}
$$
and, consequently,
\begin{equation}\label{eq::bkPk}
b_k+\dfrac{b_k\alpha_k}{P_k}=\dfrac{1}{P_k}.
\end{equation}
Since $P_k[0]=a_k[0]\to a[0]$ by Theorem \ref{thm:weakconvergence}  we can assume w.l.o.g that $P_k[0]\ne 0$. Moreover, $P_k$ is at most finite. Therefore, $\dfrac{1}{P_k}$ coincides with the Maclaurin series of $P_k$ in terms of variable $d^{\frac1p}$,
where
$$
p={\GCD\left(\{n\;\mid\;\frac{m}{n}\in \bigcup_{k\in \mathbb N}\supp(P_k)\mbox{ is irreducible}\}\right)},
$$
exists, since $\cup_{k\in \Bbb N}\supp(P_k)$ is finite, due to the regularity of sequence $(a_k)$.
Thus, $\left(\dfrac{1}{P_k}\right)_k$ is regular. Further, note that for each $k$ we have
\begin{align*}
\min \left(\supp\dfrac{b_k\alpha_k}{P_k}\right)&= \min (\supp {b_k})+ \min (\supp {\alpha_k})+ \min \left(\supp \frac {1}{P_k}\right)\\&>0+\bar q+0=\bar q.
\end{align*}
Therefore, due to \eqref{eq::bkPk}
$$
(\supp b_k)\cap (-\infty,\bar q]=\left(\supp \frac {1}{P_k}\right)\cap (-\infty,\bar q]
$$
and 
$$
\{q\le\bar q\;\mid\; q\in \supp(b_k),k\in \mathbb N\}=\{q\le\bar q\;\mid\; q\in \supp\left(\frac {1}{P_k}\right),k\in \mathbb N\}
$$
is finite, since $\left(\dfrac {1}{P_k}\right)_k$ is regular. This yields the regularity of $b_k=\dfrac{1}{a_k}$.

Further, in order to prove $b_k\wkto \dfrac1a$, let us denote 
$$
c_k:=b_k-\frac{1}{a}=\frac{1}{a_k}-\frac{1}{a}.$$

Note that $c_k$ is at most finite. Further, $(c_k)_{k\in \Bbb N}$ is regular, since ${b_k}$ is regular and
\begin{equation}\label{akck}
a_kc_k =1-a_k\dfrac{1}{a}\wkto 0,
\end{equation}
since $a_k\wkto a$ and weak convergence is continuous with respect to multiplication for the regular sequences by Lemma \ref{lemma::continuityofwc}.

Since $c_k$ is regular and at most finite, $Q:=\cup_{k\in \Bbb N} \supp (c_k)$ is left-finite and $\min(Q)\ge 0$, hence we can write
$Q=(q_i)_{i=0}^\infty$, where $q_0\ge 0$ and $(q_i)$ is strictly increasing.

Further, let us denote 
$$
C_{0,k}=c_k
$$
and for $i={0,1, 2,\dots}$ we put
$$
C_{i+1,k}:=C_{i,k}-C_{i,k}[q_i]d^{q_i}=C_{i,k}-c_k[q_i]d^{q_i}=\sum_{j=i+1}^{\infty}c_k[q_j]d^{q_j}.
$$ 
Firstly, we prove by induction in $i$, that 
\begin{equation}\label{eq::akCik}
a_kC_{i,k}\wkto 0.
\end{equation}
Indeed, for $i=0$ equation \eqref{akck} leads to
$$
a_kC_{0,k}=a_k c_k \wkto 0.
$$
Assume \eqref{eq::akCik} holds for $i$. Note that $\lambda(C_{i,k})\ge q_{i}$, hence,
 $$
a_k[0] c_k[q_i]=a_k[0]C_{i,k}[q_i]=(a_kC_{i,k})[q_i]\to 0 \mbox{ in }\Bbb R,
$$ 
and, therefore, 
\begin{equation}\label{cqi}
c_k[q_i]\to 0  \mbox{ in }\Bbb R,
\end{equation}
since $a_k[0]\to b[0]\ne 0$ in $\Bbb R$. Hence, $c_k[q_i]d^{q_i}\wkto 0 $.
Further, we have
$$
a_kC_{i+1,k}=a_kC_{i,k}-a_kc_k[q_i]d^{q_i}\wkto 0,
$$
since both terms converge weakly to $0$.

Altogether, \eqref{eq::akCik} holds for all $i$, and we already have shown, see  \eqref{cqi}, that from there follows $c_k[q_i]\to 0$ in $\Bbb R$. Finally, since 
$c_k$ is regular and  $c_k[q_i]\to 0$ in $\Bbb R$ for all $i\in \Bbb N$, we obtain by convergence criterium, Theorem \ref{thm:weakconvergence}, that 
$c_k\wkto 0,$
hence
$$
\dfrac{1}{a_k}\wkto \dfrac{1}{a}.
$$
\end{proof}

To prove Theorem \ref{thm::squareroot}, where we show the convergence of  square roots of a sequence in some cases, we first need the following:
\begin{lemma}\label{lemma::sqrtOfPoly}
Let $(P_k)_k\subset\R, P\in \R$ can be represented as polynomials of degree at most $n$ of $d^\frac{1}{p}$, for some rational $p>0$, $P[0]\ne 0$ and
$$
P_k\wkto P.
$$
Then for any $r\in \mathbb Q^+$ with $r\le 1$, we have $P_k^r\wkto P^r$.
\end{lemma}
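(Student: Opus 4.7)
The approach is to factor out the leading coefficient, express $P^r$ through the binomial series, and then apply the weak-convergence criterion (Theorem \ref{thm:weakconvergence}) coefficient-by-coefficient, exploiting the fact that each coefficient of $P_k^r$ is determined by only finitely many terms of that series.

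For the setup, since $\supp P \subseteq \{0, 1/p, \dots, n/p\}$ and $P[0] \ne 0$, we have $\lambda(P)=0$ and may write $P = P[0](1+Q)$, where $Q := (P-P[0])/P[0]$ satisfies $\lambda(Q) \ge 1/p$. From $P_k \wkto P$ and Theorem \ref{thm:weakconvergence} we obtain $P_k[0] \to P[0]$ in $\mathbb R$, so $P_k[0] \ne 0$ for large $k$ and we factor $P_k = P_k[0](1+Q_k)$ analogously with $\lambda(Q_k) \ge 1/p$. Define
\[
(1+Q)^r := \sum_{j=0}^{\infty} \binom{r}{j} Q^j, \qquad P^r := P[0]^r \,(1+Q)^r,
\]
and analogously for $P_k^r$; each series converges strongly in $\R$ since $\lambda(\binom{r}{j}Q^j) \ge j/p \to \infty$ makes the partial sums strongly Cauchy. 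The real convergence $P_k[0] \to P[0]$ is in particular weak convergence in $\R$, and, noting that $P_k - P_k[0]$ has support in the fixed finite set $\{1/p, \dots, n/p\}$ (hence regular), two applications of Lemma \ref{lemma::continuityofwc} — first subtracting $P_k[0]$ from $P_k$, then multiplying by the scalar $1/P_k[0] \to 1/P[0]$ — yield $Q_k \wkto Q$.

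I then apply Theorem \ref{thm:weakconvergence} to $(P_k^r)_k$. Regularity is automatic: every $P_k^r$ has support in the fixed left-finite set $\{j/p : j = 0, 1, 2, \dots\}$. For pointwise convergence, fix $q \in \mathbb Q$; since $\lambda(Q_k^j) \ge j/p$, only indices $j \le qp$ contribute to the coefficient at $q$, giving the finite-sum identity
\[
P_k^r[q] = P_k[0]^r \sum_{j=0}^{\lfloor qp \rfloor} \binom{r}{j}\, (Q_k^j)[q],
\]
and likewise for $P^r[q]$. Iterating Lemma \ref{lemma::continuityofwc} on the regular weakly convergent sequence $Q_k \wkto Q$ yields $Q_k^j \wkto Q^j$ for every fixed $j$, so Theorem \ref{thm:weakconvergence} gives $(Q_k^j)[q] \to (Q^j)[q]$ in $\mathbb R$. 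Combined with $P_k[0]^r \to P[0]^r$, every term of the finite sum above converges, so $P_k^r[q] \to P^r[q]$, whence $P_k^r \wkto P^r$.

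The main delicate point is the uniform lower bound $\lambda(Q_k) \ge 1/p$, which truncates the binomial series to a finite sum whose length is bounded uniformly in $k$ at each coefficient. This uniformity comes directly from the hypothesis that all $P_k$ are polynomials of degree at most $n$ in the same $d^{1/p}$, together with $P_k[0] \to P[0] \ne 0$ keeping the leading coefficients away from zero for large $k$; the restriction $r \le 1$ plays no essential role in this argument beyond controlling the sign pattern of the binomial coefficients.
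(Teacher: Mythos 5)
Your proof is correct, and it follows the same overall strategy as the paper — establish regularity by observing that every $P_k^r$ has support in the fixed left‑finite set $\{j/p : j \ge 0\}$, then verify coefficient‑wise convergence and invoke the weak convergence criterion (Theorem~\ref{thm:weakconvergence}). The difference is in how coefficient‑wise convergence is obtained. The paper treats $P_k^r$ as the Maclaurin series of the real function $x\mapsto(P_k(x))^r$ and appeals to Fa\`a di Bruno's formula to see that each Maclaurin coefficient is a continuous function of the coefficients $a_k[i/p]$. You instead factor out the constant term, write $P_k^r = P_k[0]^r\sum_{j\ge0}\binom{r}{j}Q_k^j$ with $\lambda(Q_k)\ge 1/p$, and note that for a fixed $q$ only the terms with $j\le qp$ contribute, so each coefficient is an explicit finite polynomial expression in $Q_k^j[\,\cdot\,]$ and $P_k[0]^r$, whose convergence follows from iterated applications of Lemma~\ref{lemma::continuityofwc} and Theorem~\ref{thm:weakconvergence}. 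Your route is more elementary and self‑contained (no black‑box formula for derivatives of a composition), and it makes the bookkeeping — which coefficients of $P_k$ influence which coefficient of $P_k^r$ — completely transparent; the paper's version is shorter but leans on a named identity. Your closing remark that the restriction $r\le 1$ is inessential is also accurate: both arguments work for any positive rational $r$, provided $P\succ 0$ so that $P^r$ (equivalently $P[0]^r$) is defined — an implicit hypothesis shared with the paper, which suffices for the lemma's application inside Theorem~\ref{thm::squareroot}.
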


\begin{proof}
Let us denote $x:=d^\frac{1}{p}$ and
let $P_k=\sum_{i=0}^n a_k[i/p]x^i$, $P=\sum_{i=0}^n a[i/p]x^i$. W.l.o.g by convergence criterium for weak convergence, Theorem \ref{thm:weakconvergence}, we can assume, that $a_k[0]\ne 0$ for all $k$. Then $P^r_k$  and $P^r$  can be calculated as Maclaurin series of $P^r(x)$, with the substitution $x=d^\frac{1}{p}$, i.e.
\begin{equation}\label{eq::sqrtPk}
 P^r_k=\sum_{n=0}^\infty \left.\dfrac{({P^r_k(x)})^{(n)}_x}{n!}\right|_{x=0}d^\frac{n}{p},
\end{equation}
and
$$
 P^r=\sum_{n=0}^\infty \left.\dfrac{({P^r(x)})^{(n)}_x}{n!}\right|_{x=0}d^\frac{n}{p},
$$
It is clear due to \eqref{eq::sqrtPk} that the sequence $\left(
P_k^r\right)_k$ is regular.  Moreover,
$$
\left.\dfrac{( {P^r_k(x)})^{(n)}_x}{n!}\right|_{x=0}\to \left.\dfrac{( {P^r(x)})^{(n)}_x}{n!}\right|_{x=0}
$$
in $\Bbb R$ due to Faà di Bruno's formula, applied to the composition of rational degree and polynomial (note, that  $P_k(0)\ne 0$). Therefore, 
$$
P^r_k\wkto P^r
$$
by the criterium for the weak convergence, Theorem \ref{thm:weakconvergence}.
\end{proof}

\begin{theorem}\label{thm::squareroot}
Let $(a_k)_{k\in \Bbb N}\subset \Bbb \R$ and $a_k \wkto a$ as $k\to \infty$. Let $ a[0]\ne 0$ and the following conditions holds:
\begin{itemize}
\item
all $(a_k)_{k\in \Bbb N}$ and $a$ are at most finite,
\item
$(a_k)_{k\in \Bbb N}$ is regular,
\end{itemize}
Then 
$$
\sqrt{a_k}\wkto \sqrt{a}.  
$$
\end{theorem}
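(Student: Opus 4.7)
The plan is to follow the template of the proof of Theorem \ref{thm::regularityOfinverse}: first establish that the sequence $b_k := \sqrt{a_k}$ is regular, then pin down its weak limit by showing that every coefficient of $c_k := b_k - \sqrt{a}$ converges to zero in $\mathbb{R}$ (resp.\ $\mathbb{C}$) and appealing to the Convergence Criterion, Theorem \ref{thm:weakconvergence}. A consistent branch of the square root is chosen so that $b_k[0] \to \sqrt{a}[0]$, which is possible because $a_k[0] \to a[0] \neq 0$ in $\mathbb{R}$ (resp.\ $\mathbb{C}$), and which makes each $b_k$ at most finite with $\lambda(b_k) = 0$.

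For the regularity of $(b_k)$, I would fix $\bar q \in \mathbb{Q}$ and split $a_k = P_k + \alpha_k$ with $\supp P_k \subset (-\infty, \bar q]$ and $\supp \alpha_k \subset (\bar q, \infty)$, exactly as in the proof of Theorem \ref{thm::regularityOfinverse}. Regularity of $(a_k)$ makes $\bigcup_k \supp P_k$ finite, so the $P_k$ are polynomials of bounded degree in a common $d^{1/p}$, and we may assume $P_k[0] \neq 0$. Lemma \ref{lemma::sqrtOfPoly} (applied with $r = 1/2$) then gives that $(\sqrt{P_k})$ is weakly convergent and, by inspection of its proof via the Maclaurin expansion, regular. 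Factoring $b_k = \sqrt{P_k}\,\sqrt{1 + \alpha_k/P_k}$ and using that $\lambda(\alpha_k/P_k) = \lambda(\alpha_k) > \bar q$ (so that the strongly convergent binomial series for $\sqrt{1+x}$ contributes only terms of support strictly above $\bar q$ beyond the leading $1$) yields the key identity $\supp(b_k) \cap (-\infty, \bar q] = \supp(\sqrt{P_k}) \cap (-\infty, \bar q]$. Since $\bar q$ is arbitrary, $(b_k)$ is regular, and as a by-product each $b_k[q]$ equals a fixed polynomial expression in the convergent quantities $\{a_k[q'] : q' \leq \bar q\}$, hence $b_k[q]$ is bounded in $k$ for every $q \in \mathbb{Q}$.

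For the weak convergence, set $u_k := b_k + \sqrt{a}$; then $(u_k)$ is regular and at most finite with $u_k[0] \to 2\sqrt{a}[0] \neq 0$, and the factorization $u_k c_k = b_k^2 - a = a_k - a \wkto 0$ holds. Enumerating $\bigcup_k \supp c_k = (q_i)_{i \geq 0}$ (left-finite with $q_0 \geq 0$ because $c_k$ is regular and at most finite) and setting $C_{0,k} := c_k$, $C_{i+1,k} := C_{i,k} - c_k[q_i] d^{q_i}$, I would induct on $i$ to show $u_k C_{i,k} \wkto 0$. The base case is $u_k C_{0,k} = a_k - a \wkto 0$. For the inductive step, extracting the $[q_i]$ coefficient of $u_k C_{i,k}$ (only the pairing $r=0$, $p=q_i$ survives, since $\lambda(C_{i,k}) \geq q_i$ and $u_k$ is at most finite) gives $u_k[0] c_k[q_i] \to 0$, whence $c_k[q_i] \to 0$. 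The remaining term $u_k c_k[q_i] d^{q_i}$ is handled directly from the definition of weak convergence via the estimate
$$
\|u_k c_k[q_i] d^{q_i}\|_{1/r} \leq |c_k[q_i]| \cdot \|u_k\|_{1/r - q_i},
$$
whose right-hand factor is bounded in $k$ by the coefficient-boundedness noted above, so the product tends to $0$ in the relevant semi-norms. Hence $u_k C_{i+1,k} \wkto 0$, completing the induction. With $(c_k)$ regular and every $c_k[q_i] \to 0$, Theorem \ref{thm:weakconvergence} concludes $c_k \wkto 0$.

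The main obstacle is the regularity step: a naive attempt to invoke Lemma \ref{lemma::sqrtOfPoly} on $a_k$ itself fails because $a_k$ is generally not a polynomial in $d^{1/p}$, so one has to truncate at $\bar q$, take the square root of the polynomial part $P_k$, and argue that the tail $\sqrt{1 + \alpha_k/P_k} - 1$ cannot interfere below $\bar q$. The by-product that the coefficients $b_k[q]$ are uniformly bounded in $k$ is essential for controlling $\|u_k c_k[q_i] d^{q_i}\|_{1/r}$ in the convergence step, because $(u_k)$ is not yet known to converge weakly, so Lemma \ref{lemma::continuityofwc} cannot be applied directly to the product $u_k \cdot c_k[q_i] d^{q_i}$.
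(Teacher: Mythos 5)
Your proposal is correct, but it departs from the paper's proof in both of its main steps. For the \emph{regularity} of $(b_k)$, you split off the polynomial part $P_k$ exactly as the paper does, but then obtain the key identity $\supp(b_k)\cap(-\infty,\bar q]=\supp(\sqrt{P_k})\cap(-\infty,\bar q]$ by the multiplicative factorization $b_k=\sqrt{P_k}\,\sqrt{1+\alpha_k/P_k}$ together with the (strongly convergent, since $\lambda(\alpha_k/P_k)>\bar q\ge 0$) binomial series. The paper instead sets $b_k=R_k+\beta_k$, squares to get $R_k^2=_{\bar q}P_k$, and then rules out the spurious branch $R_k=_{\bar q}-\sqrt{P_k}$ by a two-case valuation argument. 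Your factorization is cleaner and avoids that case split; the only point to keep in mind is that you must take $\bar q\ge 0$ so that $P_k[0]\ne 0$ (which is also implicit in the paper).

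For \emph{weak convergence}, you transplant the inductive coefficient-stripping mechanism from the proof of Theorem \ref{thm::regularityOfinverse}, using $u_kc_k=a_k-a\wkto 0$ and the auxiliary observation that $\|u_k\|_s$ is bounded in $k$ (needed because $(u_k)$ is not yet known to converge weakly, so Lemma \ref{lemma::continuityofwc} is unavailable for the term $u_k\,c_k[q_i]d^{q_i}$); this is sound, and your semi-norm estimate $\|u_k c_k[q_i]d^{q_i}\|_{1/r}\le |c_k[q_i]|\,\|u_k\|_{1/r-q_i}$ is exactly right. The paper, however, takes a shorter route: once $b_k[\bar q]=(\sqrt{P_k})[\bar q]$ is known, Lemma \ref{lemma::sqrtOfPoly} directly gives $(\sqrt{P_k})[\bar q]\to(\sqrt P)[\bar q]=(\sqrt a)[\bar q]$, and the Convergence Criterion, Theorem \ref{thm:weakconvergence}, concludes immediately. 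Notice that your own ``by-product'' (that $b_k[q]$ is a fixed continuous expression in the converging coefficients $a_k[q']$, $q'\le\bar q$) actually yields \emph{convergence} of $b_k[q]$, not just boundedness; had you used it at full strength you could have skipped the induction entirely and finished exactly as the paper does. Also, ``polynomial expression'' is a slight overstatement — it is rational with a square root of $P_k[0]$ — but continuity near $a[0]\ne 0$ is all that is needed. In short: a correct proof, with a neater regularity step and a more roundabout (though valid) convergence step than the paper's.
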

\begin{proof}
Firstly, let us prove that the sequence $b_k:=\sqrt{a_k}$ is regular. We use the approach similar to the one in proof of Theorem \ref{thm::regularityOfinverse}. Let  $\bar q\in \mathbb Q$ be fixed. We write
\begin{equation}\label{eq::ak}
a_k=P_k+\alpha_k,
\end{equation}
where $\supp P_k\subset(-\infty,\bar q]$ and $\supp \alpha_k \subset (\bar q,+\infty)$. W.l.o.g we can assume that $P_k[0]\ne 0$, since $a[0]\ne 0$. Further, let
$$
b_k=R_k+\beta_k,
$$
where $\supp R_k\subset(-\infty,\bar q]$ and $\supp \beta_k \subset (\bar q,+\infty)$.
Then from $b_k^2=a_k$ we conclude that $b_k$ is at most finite, i.e. so is $R_k$, and $b_k[0]\ne 0$, moreover we get:
$$
R_k^2+2\beta_kR_k+\beta_k^2=P_k+\alpha_k,
$$
and, comparing the supports of the summands, we get
$$
R_k^2=_{\bar q}P_k.
$$
Then we can conclude, that $R_k=_{\bar q}\sqrt{P_k}$. Indeed, assume 
\begin{equation}\label{eq::star}
R_k=\sqrt{P_k}+S_k,
\end{equation} 
where $S_k$ is at most finite due to at most finiteness of $R_k$ and $\sqrt{P_k}$. Then 
$$
R_k^2=P_k+2S_k\sqrt P_k+S_k^2=_{\bar q}P_k.
$$
Therefore,
$$
2S_k\sqrt P_k+S_k^2=_{\bar q}0,
$$
and, hence,
\begin{equation}\label{eq::lambdap}
\lambda \left(S_k\right)+\lambda \left(2\sqrt P_k+S_k\right)=\lambda \left(S_k(2\sqrt P_k+S_k)\right)>\bar q.
\end{equation}
Note that both summands are at most finite and $\left (2\sqrt P_k\right)[0]\ne 0$ . Hence, for $\lambda_0:=\lambda\left(2\sqrt P_k+S_k\right)$ we have two options:
\begin{itemize}
\item
if $\left(2\sqrt P_k\right)[0]\ne -S_k[0]$, then $\lambda_0=0$. Therefore, $\lambda \left( S_k\right)>\bar q$, from where, due to \eqref{eq::star}, follows $R_k=_{\bar q}\sqrt{P_k}$.
\item
if $\left(2 \sqrt P_k\right)[0]=-S_k[0]$ and, hence, $\lambda(S_k)=\lambda(-2 \sqrt P_k)=0$, then let us put 
$$
\overline p:=\lambda \left(2\sqrt P_k+S_k\right),
$$ 
and we get
$$
0+\overline p=\lambda \left(S_k\right)+\lambda \left(2\sqrt P_k+S_k\right)>\bar q,
$$
where the last inequality follows from \eqref{eq::lambdap}, i.e. $\overline p>\bar q$, hence,  
$$
\lambda \left(2\sqrt P_k+S_k\right)>\overline q,
$$
 which leads to $S_k=_{\bar q}-2\sqrt P_k$, i.e. $R_k=_{\bar q}~-\sqrt P_k$, and that  is not possible, since $R_k\succ 0$ by definition of the square root in an ordered field.
\end{itemize}
Therefore, $R_k=_{\bar q} \sqrt P_k$. We know that $(a_k)$ is regular, hence, $\cup_{k\in \Bbb N}\supp(P_k)$ is finite by definition of $P_k$.  Therefore, the sequence of Maclaurin expansions of $(\sqrt P_k)_k$ of variable $d^{\frac1p}$,
where
$$
p={\GCD\left(\{n\;\mid\;\frac{m}{n}\in \bigcup_{k\in \mathbb N}\supp(P_k)\mbox{ is irreducible}\}\right)},
$$
exists and is regular.
Then from 
$$
b_k=_{\bar q} R_k=_{\bar q} \sqrt P_k,
$$ 
follows, that the set
$$
\{q\in \cup_{k\in \Bbb N}\supp b_k\;\mid\; q<\bar q\}
$$
is finite, i.e. $(b_k)_k$ is regular. 

Moreover, from the above, we have that for any $\bar q$ the following holds:
$$
b_k[\bar q]=R_k[\bar q]=(\sqrt  P_k)[\bar q]
$$
and due to Lemma \ref{lemma::sqrtOfPoly} we have $(\sqrt  P_k)[\bar q]\to \left(\sqrt P\right)[\bar q]$,
where $P=wk \lim_{k\to \infty}P_k$. Further, due to \eqref{eq::ak} and $a_k\wkto a$ we can write
$$
a=P+\alpha,
$$
where $\alpha=wk \lim_{k\to\infty} \alpha_k$, i.e. $\supp \alpha\in (\bar q,+\infty)$ and by the same argument as above for $a_k$, we get $\sqrt a=_{\bar q}\sqrt P$. Therefore,
$$
b_k[\bar q]=(\sqrt  P_k)[\bar q]\to (\sqrt a) [\bar q]
$$
in $\Bbb R$. Now due to an arbitrary choice of $\bar q$ and regularity of $b_k$ we can conclude by convergence criterium for weak convergence, Theorem \ref{thm:weakconvergence}, that 
$$
b_k\wkto \sqrt a.
$$
\end{proof}
\section{Power iteration algorithm}

In this section, we will prove the convergence of the power iteration algorithm in $\ell^2$-norm (Theorem \ref{Thm::pil2}) and in maximum norm (Theorem \ref{Thm::pimax}) under the certain natural conditions. These are the main results of this note.

In order to formulate the results, we introduce the definitions of the corresponding vector norms, which are similar to the classical case.

\begin{definition}\label{def::norms}
For any $\x=(\x^1,\dots, \x^n)^\mathrm{T} \in \R^n$ (resp. $\C^n$) a \emph{maximum norm} is defined as follows
$$
\|\x\|_{\max}=\max |\x^i|
$$
and an \emph{$\ell^2$-norm} is given by
$$
\|\x\|_2 =\sqrt{|\x^1|^2+\dots+|\x^n|^2}.
$$
\end{definition}
Note that for $\x\in \R^n$ or $\C^n$ we have $\|\x\|_{\max}, \|\x\|_2\in \R^+\cup\{0\}$. Moreover, $\|\x\|_{\max}=\|\x\|_2=0$ if and only if $\x=\mathbf 0:=(0, \dots, 0)$. It follows directly from the definitions, that the triangle inequality holds in both cases.
\begin{lemma}\label{lemma::xik}
Let a matrix $A\in \C^{n,n}$ be diagonalizable and its eigenvalues $\ev_1,\dots \ev_n$ are at most finite and satisfy the following condition 
\begin{equation}\label{eq::lemmacondev1}
1=\ev_1[0]>\left|\ev_2[0]\right|\ge\dots\ge|\ev_n[0]|.
\end{equation}
Further, let $\x\in \C^n$ be a vector, such that for the representation
\begin{equation}\label{eq::xaswj}
\x=\w_1+\w_2+\dots +\w_n,
\end{equation}
where $\w_j$ is an eigenvector of $\ev_j$ with at most finite entries holds:
\begin{enumerate}[label=(\roman*)]
\item
all $\w_j$ are at most finite,
\item
$\w_1[0]\ne \mathbf 0$,
\end{enumerate}

Then the following holds:
\begin{enumerate}[label=$\arabic*.$]
\item
 The sequence $\bfxi_k:=\sum_{j=2}^n\left(\frac{\ev_j}{\ev_1}\right)^k  \w_j$ is regular and
$$
\bfxi_k\wkto \mathbf 0.
$$
\item
$$
\dfrac{1}{\|\w_1+\bfxi_k\|_{2}}\wkto \dfrac{1}{\|\w_1\|_{2}}\
$$
\end{enumerate}

\bigskip

Moreover, if there exists a unique coordinate $i_0$ such that $
|\w^{i_0}_1[0]|=\max_i |\w^{i}_1[0]|$,
then
\begin{enumerate}[label=$2^*.$]
\item
$$
\dfrac{1}{\|\w_1+\bfxi_k\|_{\max}}\wkto \dfrac{1}{\|\w_1\|_{\max}}
$$
\end{enumerate}
\end{lemma}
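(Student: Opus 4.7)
The plan is to establish the three assertions in order, using the continuity results from Section 3 at each step.

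For Claim $1$, I treat each summand $(\ev_j/\ev_1)^k\w_j$ separately. Since $\ev_1[0]=1$ we have $\lambda(\ev_1)=0$, so $\lambda(\ev_j/\ev_1)=\lambda(\ev_j)\ge 0$ for $j\ge 2$. Two cases arise: if $\lambda(\ev_j)>0$, Lemma \ref{lemma::convofpowers}(1) applies; if $\lambda(\ev_j)=0$, then $(\ev_j/\ev_1)[0]=\ev_j[0]$ has absolute value strictly less than $1$ by \eqref{eq::lemmacondev1}, so Lemma \ref{lemma::convofpowers}(2) applies. Either way $(\ev_j/\ev_1)^k$ is regular with weak limit $0$. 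Multiplying componentwise by the at most finite vector $\w_j$, and invoking Lemma \ref{lemma::continuityofwc} together with Lemma \ref{lemma:reg}, each $(\ev_j/\ev_1)^k\w_j$ is regular and weakly converges to $\mathbf 0$; summing over the finitely many $j$ completes the claim.

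For Claim $2$, I pass first to the squared norm. Using the identity $|z|^2=z\overline z$ on $\C$, together with the facts that conjugation preserves support (hence regularity) and is continuous under weak convergence, the continuity of weak convergence under sums and products of regular sequences yields $\|\w_1+\bfxi_k\|_2^2\wkto \|\w_1\|_2^2$ in $\R$. The limit is at most finite, and its coefficient at $q=0$ equals $\sum_i|\w_1^i[0]|^2>0$ by hypothesis (ii); regularity and at most finiteness of the sequence follow from the same properties of $\bfxi_k$ together with (i). Theorem \ref{thm::squareroot} then gives $\|\w_1+\bfxi_k\|_2\wkto\|\w_1\|_2$, and Theorem \ref{thm::regularityOfinverse} supplies the reciprocal convergence.

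For Claim $2^*$ the argument is the same in spirit, except that the maximum is not a continuous operation, so the uniqueness of $i_0$ is essential to stabilize which coordinate attains the max. By Theorem \ref{thm:weakconvergence} applied to Claim $1$, we have $|(\w_1+\bfxi_k)^i|^2[0]\to|\w_1^i[0]|^2$ in $\mathbb R$ for every $i$; uniqueness of $i_0$ then gives strict dominance of the coordinate $i_0$ at $q=0$ for all sufficiently large $k$, which, since the relevant elements are at most finite, upgrades to $|(\w_1+\bfxi_k)^{i_0}|^2\succ|(\w_1+\bfxi_k)^i|^2$ in $\R$, and hence (since square-root preserves the order on positive elements) $|(\w_1+\bfxi_k)^{i_0}|\succ|(\w_1+\bfxi_k)^i|$ for $i\ne i_0$. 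Thus $\|\w_1+\bfxi_k\|_{\max}=|(\w_1+\bfxi_k)^{i_0}|$ eventually, and applying Theorems \ref{thm::squareroot} and \ref{thm::regularityOfinverse} to $|(\w_1+\bfxi_k)^{i_0}|^2\wkto|\w_1^{i_0}|^2$ concludes the proof.

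The main technical obstacle I foresee is the stabilization step in Claim $2^*$: one must carefully convert pointwise convergence of coefficients at $q=0$ into the strict $\succ$-inequality in $\R$, and then transfer this through the Levi-Civita square-root. Beyond that, the remaining hypotheses of Theorems \ref{thm::squareroot} and \ref{thm::regularityOfinverse} — regularity, at most finiteness, and nonvanishing value at $q=0$ — all reduce to (i), (ii), and Claim $1$ without difficulty.
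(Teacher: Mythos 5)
Your proof is correct and takes essentially the same route as the paper's: Lemma \ref{lemma::convofpowers} for the decay of $(\ev_j/\ev_1)^k$, continuity of weak convergence under sums and products plus Theorems \ref{thm::squareroot} and \ref{thm::regularityOfinverse} for the $\ell^2$ part, and stabilization of the maximizing coordinate $i_0$ for the max-norm part. You are somewhat more explicit than the paper in part $2^*$ (passing through $|\cdot|^2$ and the order on at-most-finite elements, and routing the square root through Theorem \ref{thm::squareroot} rather than the paper's terse appeal to the definition of weak convergence), but the underlying argument is the same.
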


\begin{proof}
\begin{enumerate}[label=$\arabic*.$]
\item
Since for any $j>1$ the ratio $\left(\dfrac{\ev_j}{\ev_1}\right)[0]$ is at most finite and either $\abs{\left(\dfrac{\ev_j}{\ev_1}\right)[0]}<1$ or $\left(\dfrac{\ev_j}{\ev_1}\right)[0]=0$ due to \eqref{eq::lemmacondev1}, the following holds 
$$
\left(\frac{\ev_j}{\ev_1}\right)^k \wkto \mathbf 0
$$
due to Lemma \ref{lemma::convofpowers}.
\item
holds due to a Definition \ref{def::norms} of $\ell^2$-norm, Lemma \ref{lemma::continuityofwc} and Theorems \ref{thm::regularityOfinverse} and \ref{thm::squareroot}.
\end{enumerate}
\begin{enumerate}[label=$2^*.$]
\item
Firstly, let us show, that for $k$ large enough
\begin{equation}\label{eq::normmaxi0}
\|\w_1+\bfxi_k\|_{\max}=|\w_1^{i_0}+\bfxi_k^{i_0}|,
\end{equation}
for  the unique $i_0$ such that $|\w_1^{i_0}[0]|=\max_i |\w_1^i[0]|$.
Indeed, for all $i$ we have
\begin{equation}\label{eq::brr1}
\left|\w_1^{i}[0]+\bfxi_k^{i}[0]\right|\le  \left|\w_1^{i}[0]\right|+\left|\bfxi_k^{i}[0]\right| 
< |\w_1^{i_0}[0]|-|\bfxi_k^{i_0}[0]|\le |\w_1^{i_0}[0]+\bfxi_k^{i_0}[0]|,
\end{equation}
for all $k$ such that 
\begin{equation}\label{eq::condK}
|\bfxi_k^{j}[0]|<\dfrac{\min_i \left(|\w_1^{i_0}[0]|-\left|\w_1^{i}[0]\right|\right)}{2}.
\end{equation}
for all $j$. Note that condition \eqref{eq::condK} holds for all $k$ large enough,  since $\bfxi_k\wkto \mathbf 0$ by (1) of the Theorem and, hence, $\bfxi_k[0]\to 0$ in $\Bbb R$  due to Theorem \ref{thm:weakconvergence}. Then \eqref{eq::normmaxi0} follows from \eqref{eq::brr1} due to the fact that $\w_1$ and  $\bfxi_k$ are at most finite.

Moreover, we have $\w_1^{i_0}+\bfxi_k^{i_0}\wkto \w_1^{i_0}$ due to $\bfxi_k\wkto \mathbf 0$, and, further,
$$
|\w_1^{i_0}+\bfxi_k^{i_0}|\wkto |\w_1^{i_0}|,
$$
follows immediately from the definition of weak convergence. Now
we can apply Theorem \ref{thm::regularityOfinverse} to \eqref{eq::normmaxi0} to get $(2^*)$.  
\end{enumerate}
\end{proof}

\begin{theorem}[Power iteration with $\ell^2$-norm]\label{Thm::pil2}
Let $A\in \C^{n,n}$ be a diagonalizable matrix  and its eigenvalues $\ev_1,\dots \ev_n\in \R$ are at most finite and satisfy the following condition 
\begin{equation}\label{cond::eigenvalues}
1=\ev_1[0]>\left|\ev_2[0]\right|\ge\dots\ge|\ev_n[0]|.
\end{equation}
Further, let $\x\in \C^n$ a vector, such that for the representation
\begin{equation}\label{eq::xaswj}
\x=\w_1+\w_2+\dots +\w_n,
\end{equation}
where $\w_j$ is an eigenvector of $\ev_j$, holds:
\begin{enumerate}[label=(\roman*)]
\item
all $\w_j$ are at most finite,
\item
$\w_1[0]\ne \mathbf 0$,

\end{enumerate}
Then for $\x_k:={A^k \x}$ holds
$$
\dfrac{\x_k}{\|\x_k\|_{2}} \wkto \dfrac{\w_1}{\|\w_1\|_{2}}.
$$
\end{theorem}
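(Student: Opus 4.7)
The plan is to factor the dominant eigenvalue out of $\x_k$ and reduce the claim to Lemma \ref{lemma::xik}(2). First, since $A$ is diagonalizable and $\w_j$ is an eigenvector for $\ev_j$, iterating $A$ on the decomposition \eqref{eq::xaswj} gives
\[
\x_k = A^k \x = \sum_{j=1}^n \ev_j^k\, \w_j = \ev_1^k \left(\w_1 + \sum_{j=2}^n \left(\frac{\ev_j}{\ev_1}\right)^k \w_j\right) = \ev_1^k\,(\w_1 + \bfxi_k),
\]
where $\bfxi_k$ is the sequence introduced in Lemma \ref{lemma::xik} and division by $\ev_1$ is legitimate since $\ev_1[0] = 1 \neq 0$ makes $\ev_1$ a unit in $\R$.

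Next, I would exploit the hypothesis $\ev_j \in \R$ to pull the scalar factor through the $\ell^2$-norm. Since $\ev_1 \in \R$ is at most finite with $\lambda(\ev_1) = 0$ and $\ev_1[\lambda(\ev_1)] = 1 > 0$, the definition of the order on $\R$ gives $\ev_1 \succ 0$, hence $|\ev_1|^k = \ev_1^k$. Consequently
\[
\|\x_k\|_2 = |\ev_1|^k\,\|\w_1 + \bfxi_k\|_2 = \ev_1^k\,\|\w_1 + \bfxi_k\|_2,
\]
and the factor $\ev_1^k$ cancels from numerator and denominator:
\[
\frac{\x_k}{\|\x_k\|_2} = \frac{\w_1 + \bfxi_k}{\|\w_1 + \bfxi_k\|_2}.
\]

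What remains is to pass to the weak limit coordinate by coordinate. Lemma \ref{lemma::xik}(1) supplies $\bfxi_k \wkto \mathbf 0$ together with regularity of $(\bfxi_k)$, so each component of $\w_1 + \bfxi_k$ is a regular sequence weakly converging to the corresponding component of $\w_1$. Lemma \ref{lemma::xik}(2) supplies $1/\|\w_1 + \bfxi_k\|_2 \wkto 1/\|\w_1\|_2$, and the proof of that assertion proceeds via Theorem \ref{thm::regularityOfinverse}, whose statement and proof also yield regularity of the inverse sequence. With both factors regular and weakly convergent, applying the multiplicative part of Lemma \ref{lemma::continuityofwc} in each coordinate yields the desired weak limit $\w_1/\|\w_1\|_2$.

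The main obstacle is not the algebra, which is essentially the classical power-iteration argument transcribed into the Levi--Civita setting, but the bookkeeping needed to invoke Lemma \ref{lemma::continuityofwc}: every product of weakly convergent sequences here requires both factors to be regular, and this regularity must be carried through each step, ultimately leaning on the regularity claims built into Theorems \ref{thm::regularityOfinverse} and \ref{thm::squareroot} that underpin Lemma \ref{lemma::xik}. A further subtle point, worth stating explicitly in the write-up, is that the hypothesis $\ev_j \in \R$ (rather than merely in $\C$) is what makes the cancellation $|\ev_1|^k = \ev_1^k$ possible and prevents a residual phase factor of the form $(\ev_1/|\ev_1|)^k$ from spoiling the limit.
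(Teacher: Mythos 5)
Your proposal is correct and follows the paper's own argument essentially verbatim: factor out $\ev_1^k$, use $\ev_1\succ 0$ (from $\ev_1[0]=1$ and at-most-finiteness) to cancel it against $\|\x_k\|_2 = \ev_1^k\|\w_1+\bfxi_k\|_2$, and then invoke Lemma~\ref{lemma::xik} together with Lemma~\ref{lemma::continuityofwc} to pass to the weak limit. Your closing remark about why $\ev_1\in\R$ (rather than $\C$) matters for avoiding a residual phase factor is a useful observation that the paper leaves implicit.
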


\begin{proof}
The representation \eqref{eq::xaswj} for $\x$ is unique, since $\ev_1, \dots,\ev_n$ is a basis of $\mathcal C^n$. Moreover, we have 
$$
\x_k=A^k\x=\sum_{j=1}^n\ev_j^k  \w_j=\ev_1^k \left(\w_1+\sum_{j=2}^n\left(\frac{\ev_j}{\ev_1}\right)^k  \w_j\right).
$$
Let us denote $\bfxi_k:=\sum_{j=2}^n\left(\frac{\ev_j}{\ev_1}\right)^k  \w_j$, i.e. $\x_k=\ev_1^k (\w_1+\bfxi_k)$. Then  
$$
\dfrac{\x_k}{\|\x_k\|_{2}}=\dfrac{\ev_1^k (\w_1+\bfxi_k)}{\ev_1^k \|\w_1+\bfxi_k\|_{2}}=\dfrac{\w_1+\bfxi_k}{\|\w_1+\bfxi_k\|_{2}},
$$
where the first equality follows from $\ev_1\succ 0$ (since $\ev_1$ is at most finite and $\ev_1[0]=1$). 
Therefore,
$$
\dfrac{\x_k}{\|\x_k\|_{2}}=\left(\w_1+\bfxi_k\right)\dfrac{1}{\|\w_1+\bfxi_k\|_{2}}\wkto \w_1\dfrac{1}{\|\w_1\|_{2}}
$$
due to Lemma \ref{lemma::xik}, and continuity of weak convergence with respect to the addition and multiplication, Lemma \ref{lemma::continuityofwc}.
\end{proof}

\begin{theorem}[Power iteration with maximum norm]\label{Thm::pimax}
Under the conditions of Theorem \ref{Thm::pil2} if there
exists a unique coordinate $i_0$ such that 
$$
|\w^{i_0}_1[0]|=\max_i |\w^{i}_1[0]|,
$$
 then for $\x_k:={A^k \x}$ holds
$$
\dfrac{\x_k}{\|\x_k\|_{\max}} \wkto \dfrac{\w_1}{\|\w_1\|_{\max}}.
$$
\end{theorem}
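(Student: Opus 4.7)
The strategy closely parallels the proof of Theorem \ref{Thm::pil2}, with part $(2^*)$ of Lemma \ref{lemma::xik} replacing part $(2)$. First I would decompose
\[
\x_k = A^k\x = \ev_1^k\left(\w_1 + \bfxi_k\right),\qquad \bfxi_k := \sum_{j=2}^n \left(\dfrac{\ev_j}{\ev_1}\right)^k \w_j,
\]
exactly as in the $\ell^2$ case. Since $\ev_1$ is at most finite with $\ev_1[0]=1$, the element $\ev_1^k$ satisfies $\ev_1^k \succ 0$, so it factors out of the maximum norm and cancels in the ratio, giving
\[
\dfrac{\x_k}{\|\x_k\|_{\max}} = \dfrac{\w_1 + \bfxi_k}{\|\w_1 + \bfxi_k\|_{\max}}.
\]

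Next I would invoke Lemma \ref{lemma::xik}: part $(1)$ furnishes both $\bfxi_k \wkto \mathbf{0}$ and the regularity of $(\bfxi_k)$, from which Lemma \ref{lemma::continuityofwc} gives $\w_1 + \bfxi_k \wkto \w_1$. Part $(2^*)$, which is activated precisely by the uniqueness hypothesis on $i_0$ imposed in this theorem, delivers
\[
\dfrac{1}{\|\w_1 + \bfxi_k\|_{\max}} \wkto \dfrac{1}{\|\w_1\|_{\max}},
\]
together with the regularity of that reciprocal sequence, inherited from Theorem \ref{thm::regularityOfinverse} applied to the single scalar coordinate $\w_1^{i_0}+\bfxi_k^{i_0}$ that, for large $k$, realises the maximum. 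A final application of Lemma \ref{lemma::continuityofwc} to the product of two regular, weakly convergent sequences then yields
\[
\dfrac{\x_k}{\|\x_k\|_{\max}} = (\w_1+\bfxi_k)\cdot\dfrac{1}{\|\w_1+\bfxi_k\|_{\max}} \wkto \dfrac{\w_1}{\|\w_1\|_{\max}}.
\]

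I do not expect any genuine obstacle, since all the analytic heavy lifting has been done in Lemma \ref{lemma::xik}. It is nonetheless worth noting why the uniqueness of $i_0$ is essential: without it, the argmax defining $\|\cdot\|_{\max}$ could oscillate among several coordinates along the sequence, so $\|\w_1+\bfxi_k\|_{\max}$ could not be reduced to a single scalar absolute value whose reciprocal is controlled by Theorem \ref{thm::regularityOfinverse}, and part $(2^*)$ of Lemma \ref{lemma::xik} would simply be unavailable. This is why the extra assumption appears here but not in the $\ell^2$ version.
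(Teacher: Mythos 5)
Your proposal is correct and takes essentially the same approach as the paper, which simply states that the proof is completely analogous to that of Theorem \ref{Thm::pil2}, with part $2^*$ of Lemma \ref{lemma::xik} playing the role of part $2$. You have spelled out that analogy accurately, including the factorisation of $\ev_1^k$ out of the maximum norm and the reason the uniqueness of $i_0$ is needed.
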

The proof is completely analogues to the proof of Theorem \ref{Thm::pil2} due to 2* of Lemma \ref{lemma::xik}.

\begin{remark}
Observe that the condition \eqref{eq::xaswj} holds for all $\x=(\x^1,\dots,\x^n)^\mathrm{T}\in~ \mathcal C^n$ with at most finite entries, since
$$
\mathbf w_i=\left(\sum_{k=1}^n \x^k \overline {\mathbf v}_i^k\right)\mathbf v_i,
$$
where $\mathbf v_i$ is the eigenvector with $\|\mathbf v_i\|_2=1$, i.e. all the entries on the right-hand side are at most finite. 
\end{remark}

    \begin{proposition}[Power iteration for eigenvalues]\label{prop::eiPI}
Let $A\in \C^{n\times n}$ be a diagonalizable matrix with at most finite entries, let $\mathbf v$ be its eigenvector with at most finite entries, corresponding to the eigenvalue $\ev$ and $\|\mathbf v\|_2[0]>0$. Let $(\mathbf u_k)_k\subset\C^n$ be such that each $(\mathbf u_k)$  has at most finite entries. If  $\mathbf u_k\wkto \mathbf v$ pointwise and is regular pointwise, then 
		\begin{align*}
			\frac{\mathbf u_k^*A \mathbf u_k}{\|\mathbf u_k\|^2_{2}}\wkto \ev,
		\end{align*}
where $\mathbf {u}_k^*=(\overline {\mathbf u}_k^1,\dots \overline{\mathbf u}_k^2, \dots,\overline{\mathbf u}_k^n)$.
	\end{proposition}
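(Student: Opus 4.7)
My plan is to observe that the target $\ev$ is precisely the Rayleigh quotient evaluated at the eigenvector $\mathbf v$ itself, and then to transfer the weak convergence $\mathbf u_k\wkto\mathbf v$ through the bilinear form in the numerator and through the reciprocal of the squared norm in the denominator, using the continuity machinery from Section~3. From $A\mathbf v=\ev\mathbf v$ one has $\mathbf v^*A\mathbf v=\ev\,\mathbf v^*\mathbf v=\ev\|\mathbf v\|_2^2$, so $\ev = \mathbf v^*A\mathbf v/\|\mathbf v\|_2^2$, and it is enough to prove
$$
\mathbf u_k^*A\mathbf u_k \wkto \ev\|\mathbf v\|_2^2 \qquad\text{and}\qquad \frac{1}{\|\mathbf u_k\|_2^2}\wkto \frac{1}{\|\mathbf v\|_2^2},
$$
then multiply via Lemma~\ref{lemma::continuityofwc}.

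For the numerator I would expand
$$
\mathbf u_k^*A\mathbf u_k=\sum_{i,j=1}^n \overline{\mathbf u_k^i}\,A_{ij}\,\mathbf u_k^j.
$$
Since conjugation $z\mapsto\overline z$ preserves supports, it preserves regularity, at-most-finiteness, and, via the convergence criterion Theorem~\ref{thm:weakconvergence}, weak convergence. Treating each entry $A_{ij}$ as a constant (hence regular, weakly convergent) sequence, Lemma~\ref{lemma::continuityofwc} and Lemma~\ref{lemma:reg} give weak convergence and regularity for every triple product, and then for the finite sum, yielding $\mathbf u_k^*A\mathbf u_k\wkto\mathbf v^*A\mathbf v=\ev\|\mathbf v\|_2^2$. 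The identical argument applied to $\|\mathbf u_k\|_2^2=\sum_i\overline{\mathbf u_k^i}\mathbf u_k^i$ produces $\|\mathbf u_k\|_2^2\wkto\|\mathbf v\|_2^2$, with this sequence regular and consisting of at-most-finite (real) elements.

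Next I would invoke Theorem~\ref{thm::regularityOfinverse} on the real-valued sequence $(\|\mathbf u_k\|_2^2)_k$. The nonvanishing hypothesis at $q=0$ follows from the assumption $\|\mathbf v\|_2[0]>0$: since $\|\mathbf v\|_2\succeq 0$ and is at most finite, $\lambda(\|\mathbf v\|_2)=0$, whence $\|\mathbf v\|_2^2[0]=(\|\mathbf v\|_2[0])^2>0$. The theorem then yields $1/\|\mathbf u_k\|_2^2\wkto 1/\|\mathbf v\|_2^2$, and its proof in fact also establishes the regularity of the inverse sequence (this is the first half of that proof). A final application of Lemma~\ref{lemma::continuityofwc} to the two regular, weakly convergent sequences $(\mathbf u_k^*A\mathbf u_k)_k$ and $(1/\|\mathbf u_k\|_2^2)_k$ delivers
$$
\frac{\mathbf u_k^*A\mathbf u_k}{\|\mathbf u_k\|_2^2}\wkto \frac{\ev\|\mathbf v\|_2^2}{\|\mathbf v\|_2^2}=\ev.
$$

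The only step that is not pure bookkeeping is the invocation of Theorem~\ref{thm::regularityOfinverse}, which hinges on the short verification that $\|\mathbf v\|_2^2[0]>0$ and on extracting regularity of the inverse sequence from the proof of that theorem; everything else is a clean assembly of the continuity results (Lemmas~\ref{lemma::continuityofwc} and~\ref{lemma:reg}) together with the convergence criterion Theorem~\ref{thm:weakconvergence}.
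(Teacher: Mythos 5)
Your proof is correct and follows essentially the same route as the paper's: establish that $(\mathbf u_k^*A\mathbf u_k)_k$ and $(\|\mathbf u_k\|_2^2)_k$ are regular, at most finite, and weakly convergent via Lemma~\ref{lemma:reg} and Lemma~\ref{lemma::continuityofwc}, invoke Theorem~\ref{thm::regularityOfinverse} to invert the denominator, multiply, and evaluate the Rayleigh quotient at $\mathbf v$. You supply somewhat more detail than the paper on conjugation preserving supports and on verifying $\|\mathbf v\|_2^2[0]>0$, but these are the same steps the paper leaves implicit.
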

	\begin{proof}
        Firstly, the sequences $(\|\mathbf u_k\|_2^2)_k\subset \R$ and $(\mathbf u_k^*A \mathbf u_k)_k\subset \C$ are at most finite by direct calculations and regular by Lemma \ref{lemma:reg}. Further, $\|\mathbf u_k\|^2\to \|\mathbf v\|^2$  and  ${\mathbf u_k^*A \mathbf u_k}\wkto \mathbf v^*A \mathbf v$ by Lemma \ref{lemma::continuityofwc}. Then by Theorem \ref{thm::regularityOfinverse} we get that
$$
\dfrac{1}{\|\mathbf u_k\|^2}\to \dfrac{1}{\|\mathbf v\|^2},
$$
from where, again by Lemma \ref{lemma::continuityofwc}, follows
$$
\dfrac{\mathbf u_k^*A \mathbf u_k}{\|\mathbf u_k\|^2}\to \dfrac{\mathbf v^*A \mathbf v}{\|\mathbf v\|^2}.
$$
Finally,
$$
\dfrac{\mathbf v^*A \mathbf v}{\|\mathbf v\|^2}=\dfrac{\mathbf v^*\ev \mathbf v}{\|\mathbf v\|^2}=\ev.
$$
\end{proof}

\begin{remark}
By the proofs of Theorem \ref{Thm::pil2} and Theorem \ref{Thm::pimax} the obtained sequences of vectors $(\x_k)$ are regular. Therefore, Theorem \ref{prop::eiPI} allows us to calculate eigenvalues for corresponding matrices using the power iteration algorithm.
\end{remark}

\section{Applications of power iteration algorithm}

In Theorem \ref{Thm::pil2} and Theorem \ref{Thm::pimax} we assumed that for the eigenvalues $\ev_1, \dots, \ev_n\in \C$ of matrix $A\in \C^{n\times n}$ the following conditions are fulfilled:
\begin{enumerate}[label=$(\mathrm{C}\arabic*)$]
\item
all $\ev_i$ are at most finite,
\item
$$
\ev_1[0]>\left|\ev_2[0]\right|\ge\dots\ge|\ev_n[0]|,
$$
\item
$\ev_1[0]=1$,
\end{enumerate}
 In Proposition \ref{GCT} we show that the first condition is fulfilled for any  matrix with at most finite entries. Further, in Proposition \ref{prop:pisigma} we show, that for a matrix $A\in \C^{n\times n}$ with at most finite entries complex parts of eigenvalues
(i.e. $\ev[0]$) can be obtained as eigenvalues of the matrix  $A[0]=(a_{ij}[0])$ for $A=(a_{ij})$, i.e.  the condition (C2) can be checked without knowledge of eigenvalues of $A$. Moreover, then
the matrix $A/\ev_1[0]$  can be calculated, and will satisfy all conditions (C1)--(C3), see Corollary \ref{thm::generalPM}.

Further in this Section, we discuss, how the power iteration algorithm can be applied for some matrices with infinitely large entries or whose all entries are infinitely small, see Theorem \ref{thm::generalPM} and we show, that the power iteration algorithm, applied to the matrices over the field of Puiseux series leads to the result be an element of the same field, Corollary \ref{cor:Puiseux}.

\begin{proposition}[Gershgorin Circle Theorem]\label{GCT}
Let $A=(a_{ij})\in \C^{n\times n}$. The unity of the circles 
$$
K:= \bigcup_{i=1}^n  \{ z \in \C\:\mid\; | z - a_{ii}| \preceq \sum_{\substack{k=1, \\k \neq i}}^n |a_{ik}|\;\}
$$ 
contains all eigenvalues of $A$.
\end{proposition}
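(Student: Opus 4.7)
The plan is to follow the classical proof of the Gershgorin Circle Theorem, transferring each step to the Levi-Civita setting by checking that the tools we need (the triangle inequality, multiplicativity of $|\cdot|$, totality of the order $\succeq$ on $\R^+\cup\{0\}$, and the possibility of dividing by a strictly positive element of $\R$) are all available from Section~2.

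Concretely, fix an eigenvalue $\ev\in\C$ of $A$ and a nonzero eigenvector $\mathbf v=(v_1,\dots,v_n)^{\mathrm T}\in\C^n$ with $A\mathbf v=\ev\mathbf v$. Since $|v_1|,\dots,|v_n|\in\R^+\cup\{0\}$ and $\R$ is a totally ordered field, I can pick an index $i$ with $|v_i|\succeq|v_k|$ for all $k$. Because $\mathbf v\neq\mathbf 0$, at least one $|v_k|\succ 0$, hence $|v_i|\succ 0$ and in particular $v_i$ is invertible in $\C$.

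The $i$-th row of the eigenvalue equation reads $\sum_{k=1}^n a_{ik}v_k=\ev v_i$, which I rewrite as
\begin{equation*}
(\ev-a_{ii})\,v_i \;=\; \sum_{\substack{k=1\\k\neq i}}^n a_{ik}v_k.
\end{equation*}
Taking the Levi-Civita absolute value, using the triangle inequality and the multiplicativity $|xy|=|x||y|$ (both immediate from the definitions recalled in Section~2), I obtain
\begin{equation*}
|\ev-a_{ii}|\cdot|v_i| \;\preceq\; \sum_{\substack{k=1\\k\neq i}}^n |a_{ik}|\cdot|v_k| \;\preceq\; |v_i|\sum_{\substack{k=1\\k\neq i}}^n |a_{ik}|,
\end{equation*}
where the last step uses the choice of $i$. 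Dividing both sides by the positive element $|v_i|\in\R^+$ gives $|\ev-a_{ii}|\preceq\sum_{k\neq i}|a_{ik}|$, so $\ev$ lies in the $i$-th Gershgorin disk, hence in $K$.

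I expect no serious obstacle here: the argument is entirely formal once one checks that the non-Archimedean order on $\R$ behaves well enough. The only point that deserves attention is the choice of the index $i$ attaining the maximum of $|v_k|$. This is legitimate because there are only finitely many indices and $\succeq$ is a total order on $\R^+\cup\{0\}$, so a maximum exists in $\R$ (even though the field is non-Archimedean and order-bounded subsets of $\R$ need not have suprema in general, finite sets always do).
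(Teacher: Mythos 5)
Your proof is correct and follows essentially the same argument as the paper: pick a maximal-modulus coordinate of the eigenvector, isolate the $i$-th row of $A\mathbf v=\ev\mathbf v$, and apply the triangle inequality and multiplicativity of $|\cdot|$. The only cosmetic difference is that the paper normalizes the eigenvector so that $\|\mathbf v\|_{\max}=1$ up front, whereas you divide by $|v_i|\succ 0$ at the end; both are equally valid.
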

\begin{proof}
Let $\mathbf v$ be an eigenvector of $A$, $\|\mathbf v\|_{\max}=1$, and $\ev$ be the corresponding eigenvalue. Let $j$ be such that $|\mathbf v^j| = \|\mathbf v\|_{\max} = 1$.

 From $A\mathbf v= \ev \mathbf v$ follows:
    $$(a_{jj} - \ev)\mathbf v^j + \sum_{\substack{k=1,\\ k \neq j}}^n a_{jk}\mathbf v^k = 0.$$

Hence, 
    $$
\abs{(a_{jj} - \ev)}=\abs{(a_{jj} - \ev)}|\mathbf v^j|=\abs{(a_{jj} - \ev)\mathbf v^j}\preceq \big|\sum_{\substack{k=1,\\ k \neq j}}^n a_{jk}\mathbf v^k\big|\preceq \sum_{\substack{k=1,\\ k \neq j}}^n \abs{a_{jk}},
$$
where the last inequality is true due to the triangle inequality and since
$$
|\mathbf v^k|\preceq \|\mathbf v\|_{\max} = 1.
$$
\end{proof}

\bigskip
Next, we show in Proposition \ref{prop:pisigma}, that in the case of at most finite matrix, the set of complex parts of its eigenvalues coincides with the set of eigenvalues of the complex part of the matrix. In order to do this we need the following notation: for any $a\in \R$ (resp. $\C$) we denote by $\pr(a)$ the \emph{real part} (resp.  \emph{complex part}) of $a$, i.e. 
$$
\pi:\R \mbox{ (resp. }\C\mbox{)}\to \mathbb R  \mbox{ (resp. }\mathbb C\mbox{)}: a\to a[0].
$$

The following immediately follows:
\begin{lemma}\label{lemma::linearityofpr}
For all at most finite $a,b\in \R$ (resp.  $\C$) the map $\pr$ is linear, i.e.
\begin{itemize}
\item
$\pr(a+b)=\pr(a) + \pr(b),$
\item
$\pr(ab)=\pr(a)\cdot \pr(b).$
\end{itemize}
\end{lemma}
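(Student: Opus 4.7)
The plan is to verify the two identities directly from the componentwise addition and convolution-style multiplication on $\R$ (resp.\ $\C$), using ``at most finite'' precisely where it is needed.

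For additivity, since addition on the Levi-Civita field is defined pointwise on $\Q$, one has $(a+b)[0] = a[0] + b[0]$ with no hypothesis at all; so $\pi(a+b) = \pi(a) + \pi(b)$. The ``at most finite'' assumption plays no role here, which is worth noting.

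For multiplicativity, I would unfold the definition
\[
(a \cdot b)[0] \;=\; \sum_{\substack{q_a, q_b \in \Q \\ q_a + q_b = 0}} a[q_a]\, b[q_b],
\]
and argue that essentially only one term survives. Since $a$ and $b$ are at most finite, $\lambda(a) \ge 0$ and $\lambda(b) \ge 0$, so $a[q_a] = 0$ for every $q_a < 0$ and $b[q_b] = 0$ for every $q_b < 0$. In the constraint $q_a + q_b = 0$ with $q_a, q_b \ge 0$, the only possibility yielding a nonzero product is $q_a = q_b = 0$. Hence $(a \cdot b)[0] = a[0]\, b[0]$, i.e.\ $\pi(ab) = \pi(a)\,\pi(b)$.

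The proof is routine and contains no real obstacle; the only point to flag is that the ``at most finite'' hypothesis is essential for multiplicativity (otherwise pairs like $q_a = -r$, $q_b = r$ with $r>0$ would contribute to $(ab)[0]$). The complex case is identical, since the multiplication formula and support conditions are unchanged; only the coefficient field differs. I would therefore write the proof uniformly for $\R$ and $\C$.
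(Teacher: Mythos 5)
Your proof is correct and matches what the paper intends: the paper offers no written proof (it labels the lemma as ``immediately following''), and your argument is precisely the routine unfolding of the pointwise addition and the convolution formula, with the correct observation that the at-most-finite hypothesis forces $q_a = q_b = 0$ in the sum defining $(ab)[0]$. Your side remark that the hypothesis is needed only for multiplicativity is also accurate.
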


\begin{proposition}\label{prop:pisigma}
Let $A \in \mathcal{C}^ {n \times n}$ be a matrix with at most finite entries. Then
$$\pi(\sigma (A))  = \sigma (\pi(A)),$$
where $\sigma$ denotes the spectrum of a matrix.
\end{proposition}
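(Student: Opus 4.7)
The plan is to compare the characteristic polynomial of $A$ with that of $\pi(A)$ and show that the latter is obtained from the former by applying $\pi$ coefficient-wise. Since $\mathcal{C}$ is algebraically closed, both polynomials split completely, and matching the factorizations gives the desired equality of sets.

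First, I would note that Proposition~\ref{GCT} applied to the at most finite matrix $A$ shows that every eigenvalue $\nu \in \sigma(A)$ is itself at most finite (each Gershgorin disk is centered at an at most finite $a_{ii}$ with at most finite radius). Hence $\pi(\nu) = \nu[0] \in \mathbb{C}$ is well-defined for every $\nu \in \sigma(A)$, and the statement $\pi(\sigma(A)) \subseteq \mathbb{C}$ is meaningful.

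Next, consider the characteristic polynomial $\chi_A(t) = \det(tI - A) \in \mathcal{C}[t]$. Since the entries of $A$ are at most finite, each coefficient of $\chi_A$ is a finite sum of products of at most finite elements of $\mathcal{C}$ and is therefore at most finite. Applying Lemma~\ref{lemma::linearityofpr} (additivity and multiplicativity of $\pi$ on at most finite elements), the coefficient-wise image $\pi(\chi_A)(t)$ is precisely $\det(tI - \pi(A)) = \chi_{\pi(A)}(t) \in \mathbb{C}[t]$.

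Since $\mathcal{C}$ is algebraically closed (Theorem~\ref{thm:2.5}), I can factor
\[
\chi_A(t) = \prod_{j=1}^{n} (t - \nu_j)
\]
where $\nu_1, \dots, \nu_n$ enumerate $\sigma(A)$ with algebraic multiplicities. Each factor $t - \nu_j$ has at most finite coefficients by the first paragraph, so applying $\pi$ and again using multiplicativity of $\pi$ on at most finite elements yields
\[
\chi_{\pi(A)}(t) = \pi(\chi_A)(t) = \prod_{j=1}^{n}(t - \pi(\nu_j)) \quad \text{in } \mathbb{C}[t].
\]
Reading off roots, this identifies $\sigma(\pi(A))$ with the multiset $\{\pi(\nu_1), \dots, \pi(\nu_n)\}$, and forgetting multiplicities gives $\sigma(\pi(A)) = \pi(\sigma(A))$.

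The only subtlety is the verification that $\pi$ commutes with the determinant, which hinges on the fact that $\pi$ is a ring homomorphism on the subring of at most finite elements of $\mathcal{C}$; this is guaranteed by Lemma~\ref{lemma::linearityofpr} combined with the closure of at most finite elements under addition and multiplication. Once this is in place the argument is essentially formal.
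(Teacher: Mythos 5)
Your proof is correct and takes essentially the same approach as the paper: both use the Gershgorin Circle Theorem to establish that eigenvalues are at most finite, exploit that $\pi$ is a ring homomorphism on at most finite elements to identify $\chi_{\pi(A)}$ with the coefficient-wise image $\pi_P(\chi_A)$, and use the algebraic closure of $\mathcal{C}$ to factor $\chi_A$ into linear factors. The only minor difference is stylistic: you obtain both inclusions simultaneously from the transported factorization $\chi_{\pi(A)} = \prod_j (t - \pi(\nu_j))$, whereas the paper proves ``$\subseteq$'' separately by evaluating $\pi_P(\chi)$ at $\pi(\nu)$ and only invokes the factorization for the ``$\supseteq$'' direction.
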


\begin{proof}
Firstly, we introduce a mapping $\pi_P$, corresponding to $\pi$, on polynomials over $\C$ with at most finite coefficients 
$$
\pi_P: \C[x]\to \Bbb C[x]: \sum_{i=0}^n a_ix^i\to \sum_{i=0}^n \pi(a_i)x^i.
$$

Secondly, we note, that due to Gershgorin Circle Theorem Proposition~\ref{GCT}, every eigenvalue $\ev$ of $A$ is at most finite.

Let us denote by $\chi$ the characteristic polynomial of $A$. Then, by the linearity of $\pi$, Lemma \ref{lemma::linearityofpr},  $\pi (A)$ has the characteristic polynomial  $\pi_P(\chi)$. 

``$ \pi(\sigma (A))  \subseteq \sigma (\pi (A))$''
\newline
Let 
$$
\chi=\sum_{i=0}^n b_ix^i.
$$
Let $\ev\in \sigma(A) $. Then $\chi(\ev)=0$ and we get 
$$
(\pi_P(\chi))(\pi (\ev))=\sum_{i=1}^n \pi (b_i) (\pi (\ev))^i=\pi \left(\sum_{i=1}^n b_i \ev^i\right)=\pi (\chi(\ev))=0,
$$
where the second equality is true due to linearity of $\pi$, Lemma \ref{lemma::linearityofpr}.
Hence, $\pi (\ev) \in \sigma (\pi (A))$.

`` $\pi(\sigma (A))  \supseteq \sigma (\pi(A))$''. Since $\C$ is algebraically closed, we can write
$$
\chi=\prod_{i=1}^n (\ev_i-x),
$$
where $\ev_i$ are eigenvalues of $A$. Then again due to the linearity of $\pi$, Lemma~\ref{lemma::linearityofpr}, we obtain
$$
\pi_P (\chi)=\prod_{i=1}^n (\pi (\ev_i)-x),
$$
is a linear factor representation of $\pi_P (\chi)$ which is known to be unique.  Therefore for any $\mu\in \sigma(\pi(A))$, i.e $(\pi_P (\chi))(\mu)=0$ we  conclude that there exists $\pi (\ev_i)=\mu$, i.e. $\mu \in \pi (\sigma (A))$.

\end{proof}

\begin{corollary}
Let $A\in \mathcal C^{n\times n}$ be an at most finite diagonalizable matrix. If all the eigenvalues of $\pi(A)$ have different absolute values, then all the eigenvalues of $A$ can be found by the power iteration method. 
\end{corollary}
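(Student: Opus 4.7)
The plan is to reduce the statement to Theorem \ref{Thm::pil2} together with Proposition \ref{prop::eiPI} by a single rescaling of $A$, and then to iterate the procedure on deflated matrices to capture the remaining eigenvalues. The three conditions (C1)--(C3) listed at the start of Section~5 are precisely what needs to be arranged, and Propositions \ref{GCT} and \ref{prop:pisigma} do most of the work.

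First I would observe that (C1) is free: since $A$ is at most finite, Proposition \ref{GCT} forces every eigenvalue $\ev_i$ of $A$ to be at most finite. Then Proposition \ref{prop:pisigma} identifies the set of complex parts $\{\ev_i[0]\}_{i=1}^n$ with the spectrum of the complex matrix $\pi(A)\in\mathbb C^{n\times n}$, which any classical numerical routine can compute; the hypothesis that these have pairwise distinct absolute values lets me relabel so that $|\ev_1[0]|>|\ev_2[0]|>\cdots>|\ev_n[0]|$, which is (C2). Passing to $B:=A/\ev_1[0]$ rescales the spectrum by $1/\ev_1[0]$, so $\pi(\ev_1/\ev_1[0])=1$ and $|\pi(\ev_i/\ev_1[0])|<1$ for $i\ge 2$, i.e.\ $B$ satisfies (C1)--(C3). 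Theorem \ref{Thm::pil2} then gives weak convergence of the normalized power iterates of $B$ (for any starting vector whose $\ev_1$-eigencomponent has nonzero complex part) to a unit eigenvector $\mathbf v_1$ of $A$ for $\ev_1$, and Proposition \ref{prop::eiPI} applied to $B$ identifies $\ev_1/\ev_1[0]$ as the weak limit of the Rayleigh quotient; multiplying by the scalar $\ev_1[0]$ recovers $\ev_1$.

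To extract the remaining eigenvalues I would apply Wielandt deflation: define
$$
A^{(2)} := A - \ev_1\,\frac{\mathbf v_1\,\mathbf v_1^*}{\|\mathbf v_1\|_2^2},
$$
which is still at most finite and diagonalizable, with spectrum $\{0,\ev_2,\ldots,\ev_n\}$. When the complex parts of $\ev_2,\ldots,\ev_n$ are all nonzero and pairwise distinct in absolute value, the previous step applies to $A^{(2)}$ in place of $A$ and yields $\ev_2$; iterating produces $\ev_3,\ldots,\ev_n$ in turn.

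The main obstacle is the mildly degenerate case when one of the $\ev_i[0]$ already equals $0$ (which by hypothesis can occur for at most a single index): then $\pi(A^{(2)})$ would acquire a double zero eigenvalue and Theorem \ref{Thm::pil2} would no longer apply directly at the next round. This can be handled by prefacing the whole procedure with a scalar shift $A\mapsto A+cI$ for a suitable $c\in\mathbb R$, which preserves eigenvectors and shifts every $\ev_i$ by $c$; the values of $c$ for which two shifted complex parts acquire equal absolute value, or for which some complex part vanishes, form a nowhere-dense subset of $\mathbb R$, so a generic $c$ achieves both nonvanishing and distinctness. The rest is routine bookkeeping, and the final eigenvalues of $A$ are obtained by subtracting $c$.
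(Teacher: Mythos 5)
Your proposal is correct in outline but takes a genuinely different route from the paper's. The paper's own ``proof'' of this corollary is essentially a pointer: it asserts that the remaining eigenvalues can be obtained ``via subspace iteration'' or ``inverse vector iteration,'' without working either of these out over $\C$. You instead give a concrete iterative scheme: rescale using Propositions~\ref{GCT} and~\ref{prop:pisigma} to satisfy (C1)--(C3), run Theorem~\ref{Thm::pil2} and Proposition~\ref{prop::eiPI}, then Wielandt-deflate $A^{(2)} := A - \ev_1\,\mathbf v_1\mathbf v_1^*/\|\mathbf v_1\|_2^2$ and repeat. The deflation preserves at-most-finiteness (since $\|\mathbf v_1\|_2=1$ after normalization) and the spectrum argument is standard, and your scalar shift $A\mapsto A+cI$ for generic real $c$ is a clean way to avoid the degeneracy where $\pi(A^{(k)})$ accumulates a repeated zero eigenvalue — the set of bad $c$ is indeed finite, since $|\ev_i[0]+c|^2-|\ev_j[0]+c|^2$ is affine in $c$ with nonzero constant term by hypothesis. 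What your sketch glosses over, and would need to be checked to make this fully rigorous, is: (a) diagonalizability of the deflated matrices (follows from distinctness of the shifted eigenvalues, but is not remarked on); (b) that the new start vector for $A^{(2)}$ admits an at-most-finite eigendecomposition with nonzero $\w_1[0]$, which is a genuine hypothesis in Theorem~\ref{Thm::pil2} and does not automatically transfer after deflation; and (c) Theorem~\ref{Thm::pil2} as written requires the eigenvalues to lie in $\R$, and its proof genuinely uses $\ev_1\succ 0$ to cancel $\ev_1^k$ in the $\ell^2$-norm, whereas after rescaling by $\ev_1[0]$ the dominant eigenvalue of $B$ is only guaranteed to have complex part $1$, not to be real — this last point is an issue the paper's Theorem~\ref{thm::generalPM} also inherits, so it is not a defect specific to your argument, but it is worth flagging since you lean on Theorem~\ref{Thm::pil2} repeatedly. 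Overall your approach is more constructive than the paper's and trades the paper's unproved appeal to subspace/inverse iteration for an explicit deflation loop; both are sketches, but yours supplies actual content.
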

\begin{proof}
This can be done, for example via subspace iteration. Furthermore, it is possible to use the inverse vector iteration. All of these methods are based directly on the power iteration.
\end{proof}

Let $B\in \mathbb C^{n\times n}$. We say that $B$ has a \emph{dominated eigenvalue} $\mu_1$ if
$$
|\mu_1|>\left| \mu_2\right|\ge\dots\ge|\mu_n|,
$$ 
where $\mu_1, \mu_2,\dots \mu_n$ are eigenvalues of $B$.

\begin{theorem}\label{thm::generalPM}
Let $A\in \C^{n\times n}$ be diagonalizable. Let $A' =d^{-q_0} A$, where $q_0=\min_{1\le i,j\le n}\lambda(a_{ij})$. If the matrix $\pi(A')\in \Bbb C^{n\times n}$ has a dominated eigenvalue, then the largest in modulus eigenvalue of $A$ and a corresponding eigenvector can be found by power iteration method.
\end{theorem}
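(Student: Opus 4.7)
The plan is to reduce $A$ by two successive rescalings to a matrix $A''$ satisfying conditions (C1)--(C3) of Theorem~\ref{Thm::pil2}, and then read off the eigenvector and eigenvalue of $A$ from the conclusion of that theorem combined with Proposition~\ref{prop::eiPI}. The two-step chain $A \mapsto A' = d^{-q_0}A \mapsto A'' = A'/\mu_1$, where $\mu_1$ is the dominant eigenvalue of $\pi(A')$, preserves diagonalizability and eigenvectors and scales eigenvalues in a controlled way, so once (C1)--(C3) hold for $A''$ everything else is a bookkeeping matter.

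First, the choice $q_0 = \min_{i,j}\lambda(a_{ij})$ guarantees that every entry of $A'$ has nonnegative valuation, so $A'$ is at most finite. By the Gershgorin Circle Theorem (Proposition~\ref{GCT}) applied to $A'$, all eigenvalues $\ev'_1,\dots,\ev'_n$ of $A'$ are then at most finite, which verifies (C1) for $A'$ and hence for $A''$. Next, Proposition~\ref{prop:pisigma} identifies the set of complex parts $\{\pi(\ev'_i)\}$ with $\sigma(\pi(A'))$, so we can label the eigenvalues so that $\pi(\ev'_1) = \mu_1$; the dominance hypothesis then gives $|\pi(\ev'_i)| < |\mu_1|$ for $i > 1$. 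Dividing $A'$ by the nonzero complex scalar $\mu_1$ yields $A''$ whose eigenvalues, by linearity of $\pi$ (Lemma~\ref{lemma::linearityofpr}), satisfy $\pi(\ev'_1/\mu_1) = 1$ and $|\pi(\ev'_i/\mu_1)| < 1$ for $i>1$. These are precisely (C2) and (C3).

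Theorem~\ref{Thm::pil2} therefore applies to $A''$: for any starting vector $\x \in \C^n$ satisfying (i)--(ii) (which, as the remark following Proposition~\ref{prop::eiPI} observes, is generic among at most finite vectors), the iterates $(A'')^k\x / \|(A'')^k\x\|_2$ converge weakly to $\w_1/\|\w_1\|_2$, where $\w_1$ is an eigenvector of $A''$ for $\ev'_1/\mu_1$. Since the two rescalings leave eigenvectors invariant, $\w_1$ is also an eigenvector of $A$ with eigenvalue $d^{q_0}\ev'_1$. A short check on valuations confirms that $d^{q_0}\ev'_1$ dominates the other $d^{q_0}\ev'_i$ in the non-Archimedean order on $|\cdot|$: the leading-order term of $|\ev'_1|$ is $|\mu_1|>0$, while each $|\ev'_i|$ ($i>1$) either has smaller leading coefficient or has strictly positive valuation, so multiplication by the positive scalar $d^{q_0}$ preserves the strict comparison. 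Finally, feeding the iterate sequence $((A'')^k\x)_k$, which is regular by the remark at the end of Section~4, into Proposition~\ref{prop::eiPI} applied to $A$ directly recovers the eigenvalue itself as a weak limit of Rayleigh quotients.

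I do not foresee a genuine obstacle: every ingredient has already been proved in the preceding sections. The one place where care is needed is the translation between dominance of $\mu_1 \in \sigma(\pi(A'))$ and the required ordering of $|\ev_i|$ on $A$; this is handled uniformly by pairing Proposition~\ref{prop:pisigma} with the valuation analysis above, provided one keeps the two scalars $d^{q_0}$ and $\mu_1$ properly separated throughout.
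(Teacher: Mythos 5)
Your proposal follows the same route as the paper: rescale $A \mapsto A' = d^{-q_0}A \mapsto A''=A'/\mu_1$, use Gershgorin and Proposition~\ref{prop:pisigma} to verify (C1)--(C3) for $A''$, then apply Theorem~\ref{Thm::pil2} (or~\ref{Thm::pimax}) and Proposition~\ref{prop::eiPI}, and translate back; you even fill in a valuation check the paper leaves implicit. One small slip in the last step: you invoke Proposition~\ref{prop::eiPI} ``applied to $A$ directly,'' but that proposition requires at most finite entries, and when $q_0<0$ the matrix $A$ is not at most finite; the fix (which the paper uses) is to apply it to $A''=A'/\mu_1$, obtaining its dominant eigenvalue $\ev$, and then recover the eigenvalue of $A$ by the constant rescaling $\ev_1 = d^{q_0}\mu_1\,\ev$, which preserves weak convergence by Lemma~\ref{lemma::continuityofwc}.
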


\begin{proof}
Due to the definition of $A'$ all its entries are at most finite. Assume the eigenvalues of the matrix $\pi(A')$ to be
$$
|\mu_1|>\left| \mu_2\right|\ge\dots\ge|\mu_n|.
$$
Due to Proposition  \ref{prop:pisigma} we conclude, that the eigenvalues of $A'$ satisfy 
$$
|\ev'_1[0]|>\left|\ev'_2[0]\right|\ge\dots\ge|\ev'_n[0]|.
$$
and $\mu_1=\ev'_1[0]$.
 Therefore, one can apply Theorem \ref{Thm::pil2} or Theorem \ref{Thm::pimax} and Proposition \ref{prop::eiPI} afterward to the matrix $A'/\mu_1$ to obtain its largest eigenvalue $\ev$. Then the largest in modulus eigenvalue of $A$ is
$$
\ev_1=\ev \mu_1 d^{-q_0}.
$$
\end{proof}

\begin{remark}
We do not discuss in this note, which matrices over $\R$ or $\C$ are diagonalizable. For example, symmetric matrices and matrices of symmetric operators on $\R^n$ are diagonalizable over $\R$. Further, the companion matrices of polynomials with pairwise different roots in $\R$ or $\C$ are diagonalizable, see Section 6.
\end{remark}

Now we will formulate Corollary \ref{cor:Puiseux}, which allows us to apply the power iteration algorithm to the field of Puiseux series. Let us recollect, that for $\Bbb K$ being the field of real or complex numbers, the field of \emph{Puiseux series} is a set of formal power series:
\begin{equation*}
\mathbb K\{\{t\}\} = \left\{ \sum_{k=k_0}^{\infty} s_k t^\frac{k}{m} \,:\, s_k \in \mathbb K \text{ and } k_0 \in \mathbb{Z}, m \in \mathbb{N} \right\}
\end{equation*}
with sum and multiplication defined naturally as for formal series.

The field of Puiseux series $\mathbb R\{\{t\}\} $ is a subfield of the Levi-Civita field $\R$ due to Proposition \ref{prop::powerseriesrepresentation}. Moreover, $\mathbb R\{\{t\}\} $ is a real-closed field itself, while $\mathbb C\{\{t\}\} $ is algebraically closed,   see e.g. \cite{Hall}. This field is much more used in algebraic geometry and computer science, since Cauchy completeness with respect to strong convergence, which the Levi-Civita field possesses, is not usually needed there.  Note that $\mathbb R\{\{t\}\}\subset \R$ and $\mathbb C\{\{t\}\}\subset \C$ are subfields, therefore for the elements $a\in \mathbb R\{\{t\}\} $ (resp. $ \mathbb C\{\{t\}\} $) apply the same notations as for $a\in \mathcal R$ (resp. $C$), in particular if $a=\sum_{k=k_0}^{\infty} s_k t^\frac{k}{m}$, then
$$
\lambda(a):=\dfrac{k_0}{m},
$$
and if in addition $k_0\ge 0$, then 
$$
\pi(a)=
\begin{cases}
0, k_0>0,\\
k_0, \mbox{ otherwise}.
\end{cases}
$$
 Then the following holds for matrices over the fields of Puiseux series:
\begin{corollary}\label{cor:Puiseux}
Let a matrix $A\in \Bbb K\{\{t\}\}^{n\times n}$ be diagonalizable over $\mathbb K\{\{t\}\}$, $\Bbb K=\mathbb R$ or $\mathbb C$.  Let $A' =t^{-q_0} A$, where $q_0=\min_{1\le i,j\le n}\lambda(a_{ij})$. If the matrix $\pi(A')\in \Bbb C^{n\times n}$ has a dominated eigenvalue, then the largest in modulus eigenvalue of $A$ and corresponding eigenvector can be found by power iteration method.
\end{corollary}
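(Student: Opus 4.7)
The plan is to reduce the statement to Theorem~\ref{thm::generalPM} by exploiting the inclusion $\mathbb K\{\{t\}\} \subseteq \C$ and then verify that the iteration never leaves the Puiseux subfield. Concretely, identify the indeterminate $t$ with the Levi-Civita infinitesimal $d$, so that every Puiseux series $\sum s_k t^{k/m}$ is read as the Levi-Civita element $\sum s_k d^{k/m}$; this embedding preserves $+$, $\cdot$, the valuation $\lambda$, the operator $\pi$, and hence also the quantity $q_0 = \min_{i,j}\lambda(a_{ij})$ and the matrix $A'=t^{-q_0}A$. The hypothesis that $\pi(A')\in\mathbb C^{n\times n}$ has a dominated eigenvalue is intrinsic to complex matrices and so carries over unchanged, and diagonalizability over $\mathbb K\{\{t\}\}$ passes to the larger field $\C$ (a diagonalization stays a diagonalization under any field extension). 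Theorem~\ref{thm::generalPM} therefore applies and yields both the largest eigenvalue and a corresponding eigenvector as weak limits of the power iteration sequences associated with $A'/\mu_1$, where $\mu_1$ is the dominant eigenvalue of $\pi(A')$.

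Next I would argue that the iterates and their normalizations stay in $\mathbb K\{\{t\}\}$. Choose a start vector $\x\in\mathbb K\{\{t\}\}^n$ satisfying the hypotheses of Theorem~\ref{Thm::pil2}; since $\mathbb K\{\{t\}\}$ is a field, the vectors $\x_k=A^k\x$ remain in $\mathbb K\{\{t\}\}^n$. For the normalization note that $|z|^2=z\bar z$ and conjugation stabilizes $\mathbb C\{\{t\}\}$, so $\|\x_k\|_2^2\in\mathbb R\{\{t\}\}$; its square root exists inside $\mathbb R\{\{t\}\}$ because this field is real-closed. Thus $\x_k/\|\x_k\|_2\in\mathbb K\{\{t\}\}^n$ for every $k$, and similarly the Rayleigh quotients from Proposition~\ref{prop::eiPI} are elements of $\mathbb K\{\{t\}\}$.

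Finally, the weak limits produced by Theorem~\ref{Thm::pil2} and Proposition~\ref{prop::eiPI} are, respectively, $\w_1/\|\w_1\|_2$ and the eigenvalue $\ev$ of $A'/\mu_1$. Because $A$ is diagonalizable over $\mathbb K\{\{t\}\}$, the eigenvectors $\w_1$ and the eigenvalues $\ev_1,\dots,\ev_n$ already live in that field, so the limits indeed lie in $\mathbb K\{\{t\}\}^n$ and $\mathbb K\{\{t\}\}$; rescaling gives the dominant eigenvalue of $A$ as $\ev\cdot\mu_1\cdot t^{q_0}$, again a Puiseux series. I expect no serious obstacle: the only subtlety is checking that normalization does not exit the Puiseux subfield, which is handled by real-closedness of $\mathbb R\{\{t\}\}$; everything else is a direct transfer of Theorem~\ref{thm::generalPM}. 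Note also that weak convergence in $\C$ specializes to coefficient-wise convergence of Puiseux expansions by Theorem~\ref{thm:weakconvergence}.
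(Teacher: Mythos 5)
Your argument is correct and follows the same route as the paper's one-line proof: reduce to Theorem~\ref{thm::generalPM} via the embedding $\mathbb K\{\{t\}\}\subseteq\C$ obtained by identifying $t$ with $d$, and then invoke weak (coefficient-wise) convergence. The extra observations you supply --- that the normalized iterates remain in $\mathbb K\{\{t\}\}^n$ because $\mathbb R\{\{t\}\}$ is real-closed and $\mathbb C\{\{t\}\}$ is stable under conjugation, and that the weak limits themselves lie in $\mathbb K\{\{t\}\}$ because diagonalizability over $\mathbb K\{\{t\}\}$ already places the eigendata there --- make explicit what the paper's ``immediate consequence'' leaves implicit, and they are all correct. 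As a side note, your final rescaling $\ev\,\mu_1\,t^{q_0}$ has the right exponent sign (since $A=d^{q_0}A'$), whereas the proof of Theorem~\ref{thm::generalPM} in the paper writes $d^{-q_0}$, which appears to be a typo.
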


\begin{proof}
Is an immediate consequence of Theorem \ref{thm::generalPM}.
\end{proof}

\section{Complexity and experimental results}
We now have a look at the time complexity of the power iteration algorithm and the cost of arithmetic operations of polynomials. We investigate the complexity analysis first for companion matrices of polynomials $P$ over $\Bbb K\{\{t\}\}$, whose coefficients are polynomials in $t$.
Looking at our matrix-vector multiplication with $A \in \Bbb K\{\{t\}\}^{n\times n}$  and $A$ being a companion matrix, we have $n$ multiplications. Using the FFT Polynomial Multiplication we can assume $\mathcal{O} (n d_t \log(d_t))$ complexity, where $d_t$ is the maximum degree regarding $t$ over all matrix entries. In our normalization step, we have to add and square our polynomials, resulting in a complexity of $\mathcal{O} (d_t \log(d_t)n)$. Root taking and inversion can be neglected with a cost of $\mathcal{O} (d_t \log(d_t))$ \cite{haoze}. Concluding our analysis, we have a time complexity of $\mathcal{O} (d_t \log(d_t)n)$. For arbitrary matrices, we have $n^2$ matrix multiplications resulting in a complexity of $\mathcal{O} (n^2 d_t \log(d_t))$. 
\medskip
\newline
Now we will present some experimental results from our implementation to calculate the root of a polynomial over the Puiseux field. We remind the following definition:
\begin{definition}
The companion matrix of the polynomial $P\in \mathcal K[x]$, where $\mathcal K$ is an  arbitrary field and
$$
 {P(x)=a_{0}+a_{1}x+\cdots +a_{n-1}x^{n-1}+x^{n}},\; a_i\in \mathcal K,
$$
is the square matrix defined as
$$
 {\displaystyle C(P)=\begin{bmatrix}0&0&\dots &0&-a_{0}\\1&0&\dots &0&-a_{1}\\0&1&\dots &0&-a_{2}\\\vdots &\vdots &\ddots &\vdots &\vdots \\0&0&\dots &1&-a_{n-1}\end{bmatrix}}.
$$
\end{definition}
Note that the characteristic polynomial as well as the minimal polynomial of ${\displaystyle C(P)}$ are equal to $P$, and, hence, the eigenvalues of $C(P)$ are exactly the roots of $P$.

 Its recreation can also be found in the GitHub repository \cite{ebker}.
We use a polynomial of degree $21$ over $\Bbb R\{\{t\}\}$ from linear factor representation as an example.
It's largest root is: $$100 + t + 2t^2 + 3t^3 + 4t^4 + 5t^5 + 6t^6 + 7t^7 + 8t^8 + 9t^9.$$
We use its companion matrix to test our algorithm. The remaining roots are of the form $(2n+\frac{n}{20}t), n \in \{1,...,20\}$. The convergence is visualized in Figure \ref{figure_conv}:
 \begin{figure}[H]
\centering
\includegraphics[width=1\linewidth]{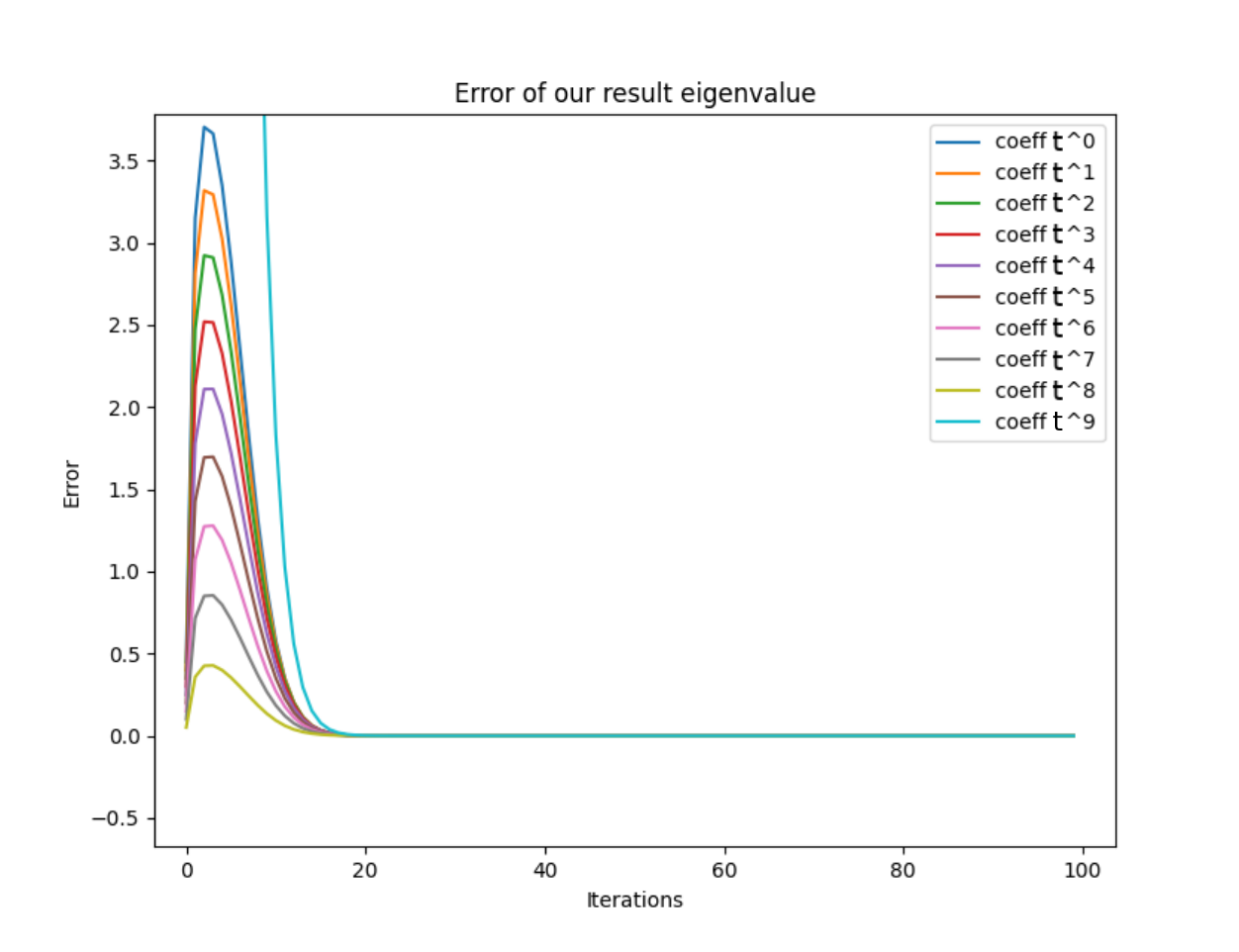}
\caption{} \label{figure_conv}
\end{figure}
We obtain the following error table for the first five components:
   \begin{table}[!ht]
           \footnotesize

    \centering
    \begin{tabular}{|l|l|l|l|l|l|}
    \hline
         Step & $t^0$ & $t^1 $& $t^2$ & $t^3$ & $t^4$ \\ \hline
        0 & 3.79432e+02 & 5.05098e-02 & 1.00464e-01 & 1.50350e-01 & 2.00276e-01 \\ \hline
        10 & 1.82443e+00 & 9.32998e-02 & 1.84316e-01 & 2.70750e-01 & 3.50363e-01 \\ \hline
        20 & 1.85048e-03 & 2.74954e-04 & 5.28935e-04 & 7.41959e-04 & 8.96181e-04 \\ \hline
        30 & 4.21493e-07 & 1.04826e-07 & 1.96292e-07 & 2.62170e-07 & 2.92486e-07 \\ \hline
        40 & 5.26370e-11 & 1.81429e-11 & 3.27154e-11 & 4.09770e-11 & 4.21161e-11 \\ \hline
        50 & 1.08002e-12 & 9.39249e-14 & 1.73417e-13 & 7.90479e-14 & 4.52971e-14 \\ \hline
        60 & 7.13385e-12 & 6.54699e-13 & 8.59979e-13 & 1.40510e-12 & 2.35723e-12 \\ \hline
        70 & 5.40012e-13 & 9.41469e-14 & 1.34115e-13 & 1.96732e-13 & 4.40981e-13 \\ \hline
        80 & 1.47793e-12 & 1.30340e-13 & 3.46390e-14 & 2.01172e-13 & 1.19016e-13 \\ \hline
        90 & 1.30740e-12 & 3.49720e-14 & 4.70735e-13 & 7.91811e-13 & 3.06422e-13 \\ \hline
        100 & 3.65219e-12 & 6.88560e-13 & 6.76792e-13 & 6.43041e-13 & 1.46061e-12 \\ \hline
    \end{tabular}
\end{table}

The last five components have similar errors.

The code took roughly 135 seconds for 100 iterations on a Lenovo p50. The performance of the code can be improved heavily with a modified version of the algorithm. 

\begin{remark}
    We have also implemented the methods of inverse vector iteration, subspace iteration and the QR algorithm. They are useful to calculate the remaining vectors or to calculate all the vectors simultaneously and efficiently.
\end{remark}
\bibliographystyle{plain}

\end{document}